\numberwithin{equation}{section}
\newtheorem{lemma}{Lemma}[section]
\newtheorem{proposition}[lemma]{Proposition}
\newtheorem{theorem}[lemma]{Theorem}
\newtheorem{remark}{Remark}
\numberwithin{equation}{section}
\begin{document}
\title[Shrinking Gradient Ricci Solitons]
{A rigidity theorem for codimension one shrinking gradient Ricci solitons in $\mathbb R^{n+1}$}
\date{\today}
\author{Pengfei Guan, Peng Lu and Yiyan Xu}
\address{Department of Mathematics and Statistics,
McGill University,
Montreal, Quebec H3A 0B9,  Canada}
\email{guan@math.mcgill.ca}
\address{Department of Mathematics,
University of Oregon, Eugene, OR 97403,
USA}
\email{penglu@uoregon.edu}
\address{Department of Mathematics, Nanjing
University, Nanjing, China, 210093 and
Department of Mathematics and Statistics,
McGill University,
Montreal, Quebec H3A 0B9,  Canada}
	 \email{xuyiyan@math.pku.edu.cn}
\thanks{Research of P.G. is supported in part by NSERC Discovery Grant.}

\begin{abstract}
We prove a splitting theorem for complete gradient  Ricci soliton with nonnegative curvature and establish a rigidity theorem for codimension one complete shrinking gradient Ricci soliton in $\mathbb R^{n+1}$ with nonnegative Ricci curvature.
\end{abstract}
\subjclass{53C20, 53C21, 35J60}
\maketitle

\section{Introduction}

A complete Riemannian metric $g$ on a smooth manifold $M^n$ is called a gradient Ricci soliton (GRS)
(Hamilton \cite{Hamilton93}, Perelman \cite{Perelman02}) if there exists a smooth function $f$ on $M$ such that
\begin{equation}\label{GraShrRicciSolitonEqu1}
\operatorname{Ric} + \operatorname{Hess}f=\frac{\lambda}{2}g,
\end{equation}
where $\lambda\in \mathbb{R}$. Below we assume that $\lambda=1,~0,~ \hbox{or}~ -1$; these
cases correspond to the GRS of shrinking, steady, or expanding type, respectively.

The classification of GRS,  especially the noncompact shrinking GRS, has been a subject of interest to
 many people. By the work of Perelman \cite{Perelman03}, Ni and Wallach \cite{NW08} and Cao, Chen, and Zhu \cite{CCZ08}, any 3-dimensional complete noncompact nonflat shrinking GRS must be the round cylinder
 $S^2\times \mathbb{R}$ or its $\mathbb{Z}_2$ quotient. Naber \cite{Naber10} proved that four dimensional
 complete noncompact shrinking GRS with bounded nonnegative curvature operator are finite quotients
  of generalized cylinders $S^2 \times \mathbb{R}^2$ or $S^3 \times \mathbb{R}$.
 Note that there are several rigidity results for higher dimensional complete noncompact
 shrinking GRS under various geometric assumptions \cite{NW08, Zhang09, PW10, CWZ11, MS13, Cai13}.

On the other hand,  Feldman, Ilmanen and Knopf \cite{FIK03} constructed $U(n)-$invariant shrinking
 K\"{a}hler GRS on the holomorphic line bundles $\mathcal{O}(-k)$ , $1\leq k\leq n$, over $P^{n-1}$,
 $n\geq 2$. Their examples are cone-like at infinity, and have Euclidean volume growth, positive scalar
 curvature and quadratic curvature decay.
However the Ricci curvature of these examples changes signs, more precisley the
 Ricci curvature is negative along the vertical (fiber)  direction  and  positive along horizontal direction.
  We do not know whether there is any nontrivial (the universal cover does not split) example of
  complete noncompact nonflat shrinking GRS with nonnegative Ricci curvature.

\medskip

The constant rank theorem is a powerful tool in the study of convex properties of
solutions of nonlinear differential equations \cite{CF85, SWYY, BG09}. In this paper we first establish
a constant rank theorem for Ricci tensor and for curvature operator of
 GRS (shrinking, steady, or expanding) and the corresponding splitting property of the GRS in section \ref{SplitThmPf}.

\begin{theorem}\label{SplGSRSThm1} Let $(M^n,g,f)$ be a GRS satisfying \eqref{GraShrRicciSolitonEqu1}.
 \begin{enumerate}[label=(\Roman*)]
   \item \label{SplStrRicThm}  If $g$ has nonnegative sectional curvature, then the rank of Ricci curvature is constant.
   Thus, either Ricci curvature is strictly positive  or the universal covering $(\widetilde{M},\tilde{g})=(N,h)\times \mathbb{R}^{n-k}$ splits isometrically and $(N,h)$ has
   strictly positive Ricci curvature;
   \item \label{SplStrCurOpeThm}  If $g$ has nonnegative curvature operator, then the rank of curvature operator is constant.  Thus, either the curvature operator is strictly positive or the universal covering $(\widetilde{M},\tilde{g})=(N,h)\times \mathbb{R}^{n-k}$ splits isometrically and $(N,h)$ has strictly positive curvature operator.
 \end{enumerate}
\end{theorem}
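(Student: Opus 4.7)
The plan is to combine the constant rank machinery of \cite{CF85,SWYY,BG09} with the standard soliton curvature identities and the de Rham decomposition. The starting inputs are the elliptic equations for the curvature on a GRS, obtained by commuting \eqref{GraShrRicciSolitonEqu1} with $\Delta$:
\begin{align*}
\Delta_f R_{ij} &= \lambda R_{ij} - 2 R_{ikjl} R^{kl},\\
\Delta_f R_{ijkl} &= \lambda R_{ijkl} + \mathcal{Q}(\mathrm{Rm}),
\end{align*}
where $\Delta_f = \Delta - \nabla_{\nabla f}$. These equations play the role of the semilinear elliptic PDEs in the classical constant rank framework, the drift term being a harmless first-order perturbation for the strong maximum principle.

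For part \ref{SplStrRicThm}, I would let $k$ denote the minimum value of $\mathrm{rank}(\mathrm{Ric})$ on $M$ and fix a point $p$ at which this minimum is attained. Working in an orthonormal frame diagonalizing $\mathrm{Ric}$ at $p$ and isolating the vanishing $(n-k)\times(n-k)$ block, I would follow Bian--Guan and consider a test quantity like $\phi=\sigma_{n-k+1}(\mathrm{Ric})$ together with a controlled gradient correction; $\phi$ is nonnegative and vanishes precisely where $\mathrm{rank}(\mathrm{Ric})\le k$. The goal is to establish a differential inequality $\Delta_f\phi\le C\phi+C|\nabla\phi|$ modulo good gradient terms that are absorbed. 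Nonnegativity of the sectional curvature enters exactly in taming the reaction term $-2R_{ikjl}R^{kl}$ on the zero eigenspace: for a unit $v$ with $\mathrm{Ric}(v,v)=0$ the identity $\sum_i K(v,e_i)=0$ together with $K\ge 0$ forces every mixed sectional curvature $K(v,e_i)$ to vanish, and this is what makes the quadratic curvature contribution behave like a subsolution on $\ker\mathrm{Ric}$.

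Once the strong maximum principle yields $\mathrm{rank}(\mathrm{Ric})\equiv k$, its kernel $K\subset TM$ is a smooth distribution of rank $n-k$ consisting of directions of zero sectional curvature. Differentiating $\mathrm{Ric}(v,v)\equiv 0$ along local sections $v$ of $K$ and using the contracted Bianchi identity together with the soliton identity $\nabla_j R=2R_{jl}\nabla^l f$ should show that the second fundamental form of $K$ vanishes, and symmetrically that $K^\perp$ is parallel. The de Rham decomposition theorem then splits the universal cover isometrically as $(N,h)\times\mathbb{R}^{n-k}$, the Euclidean factor being the (flat) leaf of $K$ and $(N,h)$ carrying strictly positive Ricci curvature. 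Part \ref{SplStrCurOpeThm} follows the identical scheme transplanted to the bundle $\Lambda^2 TM$: the constant rank argument is applied to the curvature operator, Hamilton's reaction term $\mathrm{Rm}^2+\mathrm{Rm}^{\#}$ replaces $R_{ikjl}R^{kl}$, and nonnegativity of the curvature operator replaces nonnegativity of sectional curvature.

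The hardest step will be the algebraic verification of the Bian--Guan structural condition at the minimal eigenspace — concretely, checking that after differentiating $\sigma_{n-k+1}(\mathrm{Ric})$ twice, the cross terms coming from $-2R_{ikjl}R^{kl}$ together with the third-order curvature derivatives produced by $\Delta\mathrm{Ric}$ combine into quantities controlled by $|\nabla\mathrm{Ric}|$ on the kernel. This is where the soliton equation, not just nonnegativity of curvature, is likely to be essential: it allows one to trade higher derivatives of $\mathrm{Ric}$ for expressions in $\mathrm{Hess}\,f$ and $\mathrm{Ric}$ itself. The analogous bookkeeping for the curvature operator case is considerably heavier because $\mathrm{Rm}^{\#}$ mixes eigencomponents in a more intricate way, but the structural principle is the same.
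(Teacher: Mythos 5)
Your strategy is essentially the one in the paper: form $\phi=\sigma_{r+1}(\operatorname{Ric})$ (respectively $\sigma_{r+1}(\mathfrak R)$), use the soliton elliptic equation $\Delta_f\operatorname{Ric}=2\lambda\operatorname{Ric}-2\overset{\circ}{R}(\operatorname{Ric})$, run a Bian--Guan-type computation to obtain $\Delta\phi\le C(\phi+|\nabla\phi|)$ near a point of minimal rank, invoke the strong maximum principle, and then pass to a de Rham splitting of the universal cover. Where nonnegative sectional curvature enters is also the same: it makes the reaction term $\overset{\circ}{R}(\operatorname{Ric})_{ii}=\sum_k\lambda_kR(e_i,e_k,e_i,e_k)$ nonnegative so the quadratic contribution has the right sign in the inequality. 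So your proposal and the paper agree at the level of the plan. Two places, however, where your outline deviates from (or omits) the actual mechanism in the paper deserve flagging.

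First, the parallelity of $\operatorname{null}\operatorname{Ric}$. You suggest obtaining it by differentiating $\operatorname{Ric}(v,v)\equiv0$ and feeding in the contracted Bianchi identity and $\nabla R=2\operatorname{Ric}(\nabla f)$. That route is not what the argument uses and by itself is not enough: differentiating $\operatorname{Ric}(v,v)=0$ only gives $(\nabla\operatorname{Ric})(v,v)=0$ at the minimum, not the full statement $\nabla\operatorname{Ric}(v)=0$. The clean way — and the one the paper takes — is to extract more from the maximum principle itself. Once $\phi\equiv0$, the inequality
\[
\Delta\phi\le C(\phi+|\nabla\phi|)-2\sigma_r(\Lambda_G)\sum_{i\in G}\sum_{j\in B}\tfrac1{\lambda_i}|\nabla R_{ij}|^2-\sigma_{r-1}(\Lambda_G)\sum_{i,j\in B}|\nabla R_{ij}|^2
\]
forces the sum of the manifestly nonnegative gradient terms to vanish identically. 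This gives $\nabla_kR_{ij}=0$ whenever at least one of $i,j$ is a null index, i.e.\ $\nabla\operatorname{Ric}(v)=0$ for every $v\in\operatorname{null}\operatorname{Ric}$, from which $R_{ij}\nabla_kv^i=-(\nabla_kR_{ij})v^j=0$ and parallelity follow. Keeping the gradient terms rather than discarding them is an essential part of the argument; your proposed Bianchi-identity shortcut bypasses exactly the information you need.

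Second, the splitting for the curvature operator case is not literally ``the identical scheme transplanted to $\Lambda^2TM$.'' The null space of $\mathfrak R$ lives in $\Lambda^2T_xM\cong\mathfrak{so}(T_xM)$, not in $T_xM$, so you cannot directly form an involutive distribution on $M$ and invoke the Frobenius/de Rham argument. What is needed is the extra algebraic step: parallelity of $\operatorname{null}\mathfrak R$ plus the identity $\langle[\varphi_\beta,\varphi_\gamma],\psi\rangle=0$ for $\psi\in\operatorname{null}\mathfrak R$ (which follows from $\mathfrak R^\sharp\ge0$ and the equality forced by the maximum principle) show that $\operatorname{image}\mathfrak R$ is a parallel Lie subalgebra of $\mathfrak{so}(TM)$. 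By the Ambrose--Singer theorem the holonomy Lie algebra is contained in this proper subalgebra, and only then does de Rham splitting apply, with the Euclidean factor coming from the complementary flat piece. Without this bridge the passage from constant rank of $\mathfrak R$ to a product decomposition of $M$ does not go through. Aside from these two points — and a minor slip in the Bochner/Lichnerowicz constants you wrote for $\Delta_f R_{ij}$ — your plan matches the paper's.
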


Note that since Ricci solitons satisfy Ricci flow, the splitting theorem can be obtained from the maximum principle for tensors in the parabolic setting. The maximum principle of this type was first proved by Hamilton for compact manifolds \cite{Hamilton86}, we also refer \cite{Ni04} for the corresponding result for complete noncompact manifolds under certain growth condition on tensors.

\vskip .1cm
With the help of Theorem \ref{SplGSRSThm1}, to classify shrinking GRS with nonnegative curvature operator, one
only needs to consider  GRS with positive curvature operator.
Since compact GRS with positive curvature operator must be of constant curvature, therefore, to prove the
rigidity of the complete shrinking GRS with nonnegative curvature operator one only needs to rule out
the noncompact shrinking GRS with positive curvature operator.
Note that in the K\"{a}hler case, Ni \cite{NL05} proved that a complete shrinking GRS with positive bisectional curvature must be compact;  therefore the GRS is isometric to
complex projective space $\mathbb{P}^m$ by the Mori-Siu-Yau theorem.

In the second part of this paper, we consider codimension one shrinking GRS $(M^n,g)$
isometrically embedded in $\mathbb R^{n+1}$.
If it has nonnegative Ricci curvature, then it has nonnegative curvature operator. By the classical
convexity theorem of Sacksteder-van Heijenoort, $M$ is a convex hypersurface.
The main result of this paper is the following.

\begin{theorem}\label{GSRSHypCyl} A complete codimension one shrinking GRS isometrically embedded
in $\mathbb R^{n+1}$ with positive Ricci curvature must be compact. As a consequence,
if $(M^n,g, f)$ is a complete shrinking GRS isometrically embedded in $R^{n+1}$ with nonnegative
Ricci curvature, then $(M,g)$ is a generalized cylinder $S^{k}\times \mathbb{R}^{n-k},
\, 2 \leq k\leq n$.
\end{theorem}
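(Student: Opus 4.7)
The consequence follows from the main compactness claim combined with Theorem~\ref{SplGSRSThm1}. Under $\operatorname{Ric}\ge 0$, Theorem~\ref{SplGSRSThm1}\ref{SplStrRicThm} gives either $\operatorname{Ric}>0$ on $M$ (apply the main claim and get $M$ compact) or a splitting of the universal cover $\tilde M=N^k\times\mathbb R^{n-k}$ with $N$ strictly Ricci positive (so $k\ge 2$). In the compact case, strict convexity from positive Ricci via Gauss gives positive curvature operator (sectional curvatures $\kappa_i\kappa_j>0$ in the principal bivector basis), hence $M=S^n$ by Hamilton--B\"ohm--Wilking. In the splitting case, the parallel tangent fields from the $\mathbb R^{n-k}$ factor must lie in $\ker h$ (otherwise the mixed sectional curvatures $\kappa_i\kappa_j$ on $e_i\wedge\partial_{y_j}$ would be nonzero, contradicting the intrinsic splitting), so the embedding itself splits extrinsically as $N_0\times\mathbb R^{n-k}\subset\mathbb R^{k+1}\times\mathbb R^{n-k}=\mathbb R^{n+1}$; the flat factor embeds as an affine subspace, forcing the deck group trivial and $M=N_0\times\mathbb R^{n-k}$. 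Then $N_0$ is a codim-one shrinking GRS with $\operatorname{Ric}>0$, compact by the main claim and round by the compact argument, so $M\cong S^k\times\mathbb R^{n-k}$.

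For the main claim, assume toward contradiction that $M^n\subset\mathbb R^{n+1}$ is a complete noncompact codim-one shrinking GRS with $\operatorname{Ric}>0$. By Gauss, $R_{ii}=\kappa_i\sum_{j\ne i}\kappa_j$, so positive Ricci forces every $\kappa_i>0$; hence $M$ is strictly convex, and after placing the origin inside the convex body bounded by $M$, $\sigma:=\langle \vec x,\nu\rangle>0$ on $M$ with $\nu$ the outward unit normal. Set $u:=\tfrac14|\vec x|^2-f$. The hypersurface Hessian identity $\operatorname{Hess}_M(\tfrac14|\vec x|^2)=\tfrac12 g-\tfrac12\sigma h$ combined with the GRS equation $\operatorname{Hess} f=\tfrac12 g-\operatorname{Ric}$ yields
\[
\operatorname{Hess}\,u \;=\; \operatorname{Ric}-\tfrac12\,\sigma\,h.
\]
Both model shrinkers $S^n(\sqrt{2(n-1)})$ and $S^k(\sqrt{2(k-1)})\times\mathbb R^{n-k}$ have $u$ constant, and the crux of the proof is to establish this rigidity in general. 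Once $u\equiv$ const, substitute back to get $\operatorname{Ric}=\tfrac12\sigma\,h$; comparing with $\operatorname{Ric}=Hh-h^2$ (Gauss) gives $h(H-h-\tfrac12\sigma)=0$, and since $h$ is invertible, $h=(H-\tfrac12\sigma)I$ --- i.e., $M$ is totally umbilical. A complete connected umbilical hypersurface in $\mathbb R^{n+1}$ with positive principal curvature is a round sphere, hence compact, contradicting the standing noncompactness assumption.

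For the rigidity step the plan is an integration-by-parts on the weighted manifold $(M,e^{-f}\,dV)$. Using $\operatorname{Ric}+\operatorname{Hess} f=\tfrac12 g$, the weighted Bochner identity reads
\[
\tfrac12\Delta_f|\nabla u|^2 \;=\; |\!\operatorname{Hess} u|^2+\langle\nabla\Delta_f u,\nabla u\rangle+\tfrac12|\nabla u|^2,
\]
where $\Delta_f=\Delta-\langle\nabla f,\nabla\,\cdot\,\rangle$. The Cao--Zhou estimate $f\asymp d(\cdot,p)^2/4$ and the polynomial volume growth of complete shrinkers ensure that all $e^{-f}$-weighted integrals of polynomial-in-$\vec x,\nabla f$ quantities are finite; a direct computation of $\Delta_f u$ via the trace of the soliton equation and the identity $|\nabla f|^2+R=f+C$, together with a cutoff integration, is intended to force $\nabla u\equiv 0$. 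The principal obstacle is precisely this rigidity step: $\operatorname{Hess} u$ has indefinite sign in general (eigenvalues $\kappa_i(H-\kappa_i-\tfrac12\sigma)$ can be of either sign), so no direct convexity or maximum-principle argument for $u$ applies, and the weighted Bochner identity combined with careful control of cutoff boundary terms --- for which Cao--Zhou's quadratic growth of $f$ and the polynomial volume growth of complete shrinkers are essential --- must do all the work.
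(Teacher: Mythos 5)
The proposal for the main compactness claim is incomplete, and you essentially acknowledge this yourself: the rigidity step (that $u:=\tfrac14|\vec x|^2-f$ is constant on every noncompact strictly convex codimension-one shrinker) is declared to be ``the crux'' and ``the principal obstacle,'' but no proof of it is given. As stated, the weighted Bochner scheme does not close the gap. Integrating $\tfrac12\Delta_f|\nabla u|^2=|\operatorname{Hess}u|^2+\langle\nabla\Delta_f u,\nabla u\rangle+\tfrac12|\nabla u|^2$ against $e^{-f}\,d\mu$ (granting integrability and vanishing cutoff errors) yields
\[
0=\int_M|\operatorname{Hess}u|^2e^{-f}\,d\mu-\int_M(\Delta_f u)^2e^{-f}\,d\mu+\tfrac12\int_M|\nabla u|^2e^{-f}\,d\mu,
\]
and the middle term enters with the \emph{wrong} sign: $(\Delta_f u)^2$ contains $\langle\nabla f,\nabla u\rangle^2$, which is not dominated pointwise (nor obviously on average) by $|\operatorname{Hess}u|^2$. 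So the identity does not force $\nabla u\equiv 0$ without substantial additional input, and you have not identified what that input would be. Until that is supplied, the deduction $h=(H-\tfrac12\sigma)I$, and hence compactness, does not follow. (The algebra you do carry out — the Hessian identity $\operatorname{Hess}u=\operatorname{Ric}-\tfrac12\sigma h$, the verification $u\equiv\mathrm{const}$ on the two model shrinkers, and $Hh-h^2=\tfrac12\sigma h\Rightarrow$ umbilical — is correct, and the idea of this auxiliary $u$ is attractive; but it is a plan, not a proof.)

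For orientation, the paper's proof of compactness takes an entirely different route: it uses the self-adjoint (with respect to $e^{-f}d\mu$) operator $\square_{h,f}=\sigma_2^{ij}(h)\nabla^2_{ij}-\sigma_2^{ij}(h)f_i\nabla_j$, the identity $\square_{h,f}\langle\nu,a\rangle=-(\sigma_1\sigma_2-3\sigma_3)\langle\nu,a\rangle$, and a Barta-type first-eigenvalue comparison (Proposition~\ref{PinTypIneSqu1}) on the sublevel sets $\Omega_r$ of the support-type function. Feeding the linear cutoff $u=r-\langle X,-a\rangle$ into the eigenvalue bound and invoking the interior mean-curvature estimate of Section~\ref{MeaCurGroEstSec} (to make $\int_M H e^{-f}d\mu<\infty$) forces $\inf_M(\sigma_1\sigma_2-3\sigma_3)=0$, which contradicts the Newton--MacLaurin lower bound $\sigma_1\sigma_2-3\sigma_3\gtrsim\sigma_2^{3/2}\geq\delta>0$ coming from $R\geq\delta_0>0$. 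Nothing in your write-up plays the role of the mean-curvature growth estimate or the eigenvalue comparison, and those are exactly what replaces the unproved rigidity step. Separately, in your deduction of the corollary, the passage from ``parallel fields in the $\mathbb R^{n-k}$ factor lie in $\ker h$'' to ``the embedding splits extrinsically'' and ``the deck group is trivial'' is asserted rather than argued; the paper instead splits extrinsically via the strong maximum principle for $\square_{h,f}$ applied to $\langle\nu,a\rangle$, which is cleaner and avoids any discussion of the universal cover's embedding.
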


The main ingredients of our proof are the estimate of the mean curvature and the
eigenvalue estimate of a generalized Cheng-Yau operator associated to shrinking GRS. These results
will be proved in section \ref{MeaCurGroEstSec} and section \ref{StrGenCyl}, respectively.
\\

\noindent\textbf{Acknowledgement}:  P.L. is partially supported by a Simons grant.
The work was done when Y.X. was supported by CRC Postdoctoral Fellowship at McGill University.

\section{Preliminaries of Gradient Ricci Soliton}

We collect some well known identities for gradient GRS below, they  can be found in
\cite{Hamilton93}. For a GRS satisfying \eqref{GraShrRicciSolitonEqu1}, let $\{e_i\}$ is an orthonormal basis of $TM$, the Ricci curvature
satisfies the following formula:
\begin{gather}
\nabla R= 2 \operatorname{Ric}(\nabla f), \label{RicSolIde1}\\
\Delta_f R_{ij}= 2\lambda R_{ij}-2\overset{\circ}{R}(\operatorname{Ric})_{ij}, \label{LapRicSolEqu}
\end{gather}
where
\begin{align*}
\Delta_f =\nabla_{ii}^2  - \nabla_i f \nabla_i \quad\text{ and }\quad
 \overset{\circ}{R}(\operatorname{Ric})_{ij}=\sum_kR(e_i,\operatorname{Ric}(e_k),e_j,e_k).
\end{align*}

Moreover,  after identifying $\Lambda^2T_xM$ with $\mathfrak{so}(T_xM)$,
the curvature  operator is symmetric endomorphism $\mathfrak{R}\in S^2(\mathfrak{so}(T_xM) )$,
\[\mathfrak{R}_{\alpha\beta}=R_{ijkl}\phi_\alpha^{ij}\phi_\beta^{kl},\quad\phi_\alpha=\phi_\alpha^{ij}e_i\wedge e_j\in\Lambda^2T_xM .\]
For a GRS satisfying \eqref{GraShrRicciSolitonEqu1}, the curvature operator satisfies the following formula:
\begin{equation}\label{LapCurOpeSolEqu}
 \Delta_f\mathfrak{R}_{\alpha\beta}=2\lambda\mathfrak{R}_{\alpha\beta}-\mathfrak{R}^2_{\alpha\beta}-
 \mathfrak{R}^\sharp_{\alpha\beta},
\end{equation}
where
\begin{equation}\label{PosQuaCurOpe1}
\begin{split}
  \langle\mathfrak{R}^\sharp(\phi_\alpha),\phi_\alpha\rangle&=\langle{\rm ad}\circ(\mathfrak{R}\wedge \mathfrak{R})\circ {\rm ad}^*(\phi_\alpha),\phi_\alpha\rangle \\
  &=\sum_{\beta,\gamma}\langle[\mathfrak{R}(\phi_\beta),\mathfrak{R}(\phi_\gamma)],\phi_\alpha
  \rangle\langle[\phi_\beta,\phi_\gamma],\phi_\alpha\rangle,
\end{split}
\end{equation}
here ${\rm ad}: \Lambda^2(\mathfrak{so}(T_xM))\rightarrow \mathfrak{so}(T_xM), \phi\wedge\varphi\mapsto{\rm ad}(\phi\wedge\varphi)=[\phi,\varphi]$ is the adjoint representation.

The following identities involving the curvature and potential function are satisfied for shrinking GRS
\cite{Hamilton93}\label{IdeCGSRS1},
\begin{gather}
  R+\Delta f= \frac{n}{2}, \label{RicSolIde0}   \\
 R+|\nabla f|^2-f=C_0(=0)\label{RicSolIde2}
\end{gather}
for some constant $C_0$ (By adding a constant to $f$, we assume $C_0=0$ below).

The behavior of the potential function plays an important role in understanding the structure of
shrinking GRS \cite{Perelman03, NW08, CCZ08}. The following estimates are due to \cite{Perelman02, CZ10, HM11}.

\begin{proposition}\label{ProPotGSRS} Let $(M^n,g,f)$ be complete non-compact
shrinking GRS satisfying \eqref{GraShrRicciSolitonEqu1}. Let $x_0\in M$ be the point
such that $f(x_0) =\min_{x \in M} f(x)$. Then the potential
function $f$ satisfies the estimates
\begin{gather}
 \frac{1}{4}(r(x)-5n)_+^2\leq f(x)\leq\frac{1}{4}(r(x)+\sqrt{2n})^2,\label{PotQuaGroPotEst1}  \\
|\nabla f|\leq \frac{1}{2} (r(x)+2\sqrt{f(x_0)}),\label{PotGraGroPotEst1}
\end{gather}
where $r(x)=d(x,x_0)$ is the distance function and $a_+:= \max\{a, 0\}$.
Consequently,
\begin{equation}\label{FunExpGrwFin1}
\int_M|u|e^{-f}d\mu <+\infty
\end{equation}
for any continuous function $u$ on $M$ satisfying $|u(x)|\leq Ae^{\alpha r^{2}(x)}$ where $0\leq \alpha<\frac{1}{4}$
and $A>0$.
In particular,  the weighted volume of $M$ $\int_Me^{-f}d\mu$ is finite.

\end{proposition}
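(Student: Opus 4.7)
The plan is to follow the classical Perelman--Cao--Zhou--Haslhofer--M\"uller approach based on the two identities \eqref{RicSolIde0}, \eqref{RicSolIde2} together with Chen's theorem that a complete shrinking GRS has $R\geq 0$. Evaluating at the minimum $x_0$, where $\nabla f(x_0)=0$ and $\Delta f(x_0)\geq 0$, equation \eqref{RicSolIde0} gives $R(x_0)\leq n/2$, and then \eqref{RicSolIde2} (with $C_0=0$) yields $f(x_0)=R(x_0)\leq n/2$, so $\sqrt{f(x_0)}\leq \sqrt{n/2}$. Combining $R\geq 0$ with \eqref{RicSolIde2} produces the pointwise inequality $|\nabla f|^2\leq f$, which on $\{f>0\}$ is equivalent to $|\nabla \sqrt{f}|\leq 1/2$. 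Integrating this along a minimizing unit-speed geodesic from $x_0$ to $x$ gives $\sqrt{f(x)}\leq \sqrt{f(x_0)}+r(x)/2$, which upon squaring yields the upper bound in \eqref{PotQuaGroPotEst1}; the gradient bound \eqref{PotGraGroPotEst1} follows at once from $|\nabla f|\leq \sqrt{f}\leq \sqrt{f(x_0)}+r(x)/2$.

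The lower bound is the main technical step. Along a unit-speed minimizing geodesic $\gamma:[0,s_0]\to M$ starting at $x_0$, set $\varphi(s)=f(\gamma(s))$; the soliton equation contracted with $\dot\gamma$ gives the identity $\varphi''(s)=1/2-\operatorname{Ric}(\dot\gamma,\dot\gamma)$. To control the Ricci integral I would invoke the second variation formula for arc length: for any Lipschitz $\eta$ vanishing at the endpoints, minimality forces $\int_0^{s_0}\operatorname{Ric}(\dot\gamma,\dot\gamma)\eta^2\,ds\leq (n-1)\int_0^{s_0}(\eta')^2\,ds$. Taking $\eta$ equal to $s$ on $[0,1]$, $1$ on $[1,s_0-1]$ and $s_0-s$ on $[s_0-1,s_0]$, integrating by parts against the identity for $\varphi''$, and inserting the already-proved bound at the endpoints, extracts $\varphi'(s_0)\geq s_0/2-c(n)$ once $s_0$ is large. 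A further integration in $s$ then yields the quadratic lower bound $f(x)\geq (r(x)-5n)_+^2/4$; the specific constant $5n$ is what falls out of a careful bookkeeping of the boundary contributions of $\eta$ together with the bound $f(x_0)\leq n/2$.

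Finally, the integrability statement \eqref{FunExpGrwFin1} is a consequence of the lower bound: one has $|u(x)|e^{-f(x)}\leq A\exp(\alpha r^2(x)-(r(x)-5n)_+^2/4)$, which decays like $\exp(-(1/4-\alpha)r^2)$ at infinity and therefore dominates the at-most-polynomial volume growth of $M$ (this growth follows either from $\Delta f\leq n/2$ via a Bishop--Gromov comparison, or directly by integrating on the level sets of $f$ using the upper bound \eqref{PotQuaGroPotEst1}). The principal obstacle throughout is the quadratic lower bound in \eqref{PotQuaGroPotEst1}; every other conclusion reduces either to elementary integration along geodesics or to the evaluation of $f$ at $x_0$.
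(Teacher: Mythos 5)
The paper does not prove this proposition; it quotes it from Perelman, Cao--Zhou, and Haslhofer--M\"uller, so there is no in-paper argument to compare against. Your sketch correctly reproduces the standard proof from those references, and the logical chain is sound: the normalization $C_0=0$ together with Chen's $R\ge 0$ gives $f=R+|\nabla f|^2\ge 0$ and $|\nabla\sqrt f|\le 1/2$ on $\{f>0\}$; integrating along minimizing geodesics yields the upper bound and the gradient bound; $f(x_0)=R(x_0)\le n/2$ follows from evaluating \eqref{RicSolIde0} and \eqref{RicSolIde2} at the minimum; and the lower bound comes from $\varphi''(s)=1/2-\operatorname{Ric}(\gamma',\gamma')$ coupled with second variation of arc length.

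Two places deserve more care if written out in full. For the lower bound, the Ricci integral should be split as $\int_0^{s_0}\operatorname{Ric}(\gamma',\gamma')\,ds=\int\operatorname{Ric}\,\eta^2\,ds+\int\operatorname{Ric}(1-\eta^2)\,ds$: the first piece is $\le 2(n-1)$ by second variation of arc length, while the second, supported on the two end intervals, is controlled by substituting $\operatorname{Ric}(\gamma',\gamma')=1/2-\varphi''$ and integrating $\varphi''(1-\eta^2)$ by parts, using $\varphi'(0)=0$ and the already-established pointwise bound $|\varphi'|\le|\nabla f|\le\sqrt f\le\sqrt{f(x_0)}+s/2$ on those intervals. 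This is the content your phrase about integrating by parts against the identity for $\varphi''$ must unpack into, and it is where the constant $5n$ actually comes from; one cannot simply discard the boundary contribution because $\operatorname{Ric}(\gamma',\gamma')\le 1/2$ is not available a priori (it would require $\varphi''\ge 0$). For the integrability \eqref{FunExpGrwFin1}, the decay $e^{\alpha r^2-(r-5n)_+^2/4}$ must be paired with polynomial volume growth $\operatorname{vol}(B_r)\le C r^n$. Of the two routes you offer, the second (integrating over sublevel sets of $f$ via the coarea formula, as in Cao--Zhou) is the correct one; the observation $\Delta f\le n/2$ does not by itself give a Bishop--Gromov bound on Riemannian volume, since the soliton equation only supplies a Bakry--\'Emery Ricci lower bound, which controls the weighted rather than the unweighted volume.
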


Chen \cite{Chen09} proved that the scalar curvature of complete ancient solution of Ricci flow is always nonnegative.  For complete non-flat shrinking GRS $(M^n,g,f)$ the asymptotic estimates
for the potential function $f$ also controls the curvature growth
rates. In particular, \eqref{RicSolIde2} and \eqref{PotQuaGroPotEst1} imply that the scalar curvature grows
at most quadratically,
 \begin{equation}\label{PloGroScaCur1}
 0\leq R(x)\leq \frac{1}{4}(r(x)+\sqrt{2n})^2.
  \end{equation}
When $(M,g,f)$ is assumed to have nonnegative Ricci survature,
by \eqref{RicSolIde1} we have
\[
\langle \nabla R, \nabla f\rangle= 2 \operatorname{Ric}(\nabla f,\nabla f )\geq0,
\]
thus scalar curvature $R$ is increasing along the gradient flow of potential $f$. It is established by Ni \cite[Proposition 1.1]{NL05} that there exists a $\delta_0 =\delta(M) \in (0, 1)$ such that
\begin{equation}\label{ScaStrPosBou1}
R\geq \delta_0>0.
\end{equation}
Combining $|\operatorname{Ric}|^2\leq R^2$ with \eqref{RicSolIde1}, we conclude that the gradient of scalar curvature
 grows at most polynomial fast,
\begin{equation}\label{PloGroGraScaCur1}
|\nabla R|^2=|2 \operatorname{Ric}(\nabla f)|^2\leq 4R^2|\nabla f|^2.
\end{equation}
By \eqref{RicSolIde0}, \eqref{RicSolIde1}, \eqref{RicSolIde2}, together with the Bochner identity,
\begin{equation}
\begin{split}
\Delta R&=\Delta(f-|\nabla f|^2)\\
&=\Delta f-2\big(|\nabla^2f|^2+\langle\nabla\Delta f, \nabla f\rangle+ \operatorname{Ric}(\nabla f, \nabla f)\big)\\
&= \frac{n}{2}-R -2|\operatorname{Ric}-\frac{1}{2}g|^2-2\langle\nabla (\frac{n}{2}-R), \nabla f\rangle-2
\operatorname{Ric}(\nabla f, \nabla f)\\
&=\frac{n}{2}-R-2|\operatorname{Ric}-\frac{1}{2}g|^2+2 \operatorname{Ric}(\nabla f, \nabla f).
\end{split}
\end{equation}
Hence,
\begin{equation}\label{PolyGrowLapSca}
\begin{split}
 |\Delta R| &\leq \frac{n}{2}+|R|+2|\operatorname{Ric}-\frac{1}{2}g|^2+2|\operatorname{Ric}||\nabla f|^2\\
 &\leq n+2R^2+2R|\nabla f|^2.
\end{split}
\end{equation}

We will need the following classification result for compact GRS which follows from
 the works of Hamilton \cite{Hamilton82, Hamilton86} (dimensions three
and four) and B\"{o}hm and Wilking \cite{BW08} (dimensions $\geq 5$).
\begin{theorem}\label{GSRSPosCOCla}
A compact GRS with positive curvature operator must be a space form.
\end{theorem}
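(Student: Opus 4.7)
The plan is to combine the self-similar Ricci flow structure of a GRS with the classification of compact Ricci flows having positive curvature operator. First I would dispose of the non-shrinking cases: tracing \eqref{GraShrRicciSolitonEqu1} gives $R+\Delta f=\frac{n\lambda}{2}$, and integrating over compact $M$ yields $\int_M R\,d\mu=\frac{n\lambda}{2}\operatorname{Vol}(M)$. Positive curvature operator implies positive sectional curvature, hence $R>0$, so the above identity forces $\lambda>0$. Only the shrinking normalization $\lambda=1$ is consistent.

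Next I would set up the associated self-similar Ricci flow. Let $\varphi_t$ be the one-parameter family of diffeomorphisms generated by the time-dependent vector field $\frac{1}{1-t}\nabla f$ with $\varphi_0=\operatorname{id}$, and put $g(t)=(1-t)\varphi_t^*g$ for $t\in[0,1)$. A direct computation using \eqref{GraShrRicciSolitonEqu1} shows that $g(t)$ solves the Ricci flow $\partial_t g=-2\operatorname{Ric}(g(t))$. Because $g(t)$ is a homothety of $\varphi_t^*g$, every $g(t)$ is isometric to $g$ after rescaling; in particular, positive curvature operator is maintained along the entire flow.

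The key step is to invoke the convergence theorems for normalized Ricci flow. In dimension three, Hamilton \cite{Hamilton82} showed that a compact 3-manifold with positive Ricci curvature converges under normalized Ricci flow to a metric of constant positive sectional curvature. In dimension four, Hamilton \cite{Hamilton86} established the analogous result under positive curvature operator, and B\"ohm--Wilking \cite{BW08} extended this to all dimensions $n\geq 5$. Applying the appropriate theorem, the normalization $\tilde g(t)$ of our self-similar flow converges (after rescaling by volume) to a metric $g_\infty$ of constant positive sectional curvature. But $\tilde g(t)$ is constant in time \emph{up to the diffeomorphism} $\varphi_t$, so $\tilde g(0)$ itself must be isometric to $g_\infty$; since $g$ and $\tilde g(0)$ differ only by a global constant, $(M,g)$ is a spherical space form.

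The only real obstacle is invoking the deep Hamilton and B\"ohm--Wilking convergence theorems; once the reduction to the shrinking case via integration and the self-similar flow structure are recognized, the conclusion is essentially a citation argument.
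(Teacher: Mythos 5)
Your argument is correct and is exactly the argument the paper has in mind: the paper states this as a known result and simply cites Hamilton \cite{Hamilton82,Hamilton86} and B\"ohm--Wilking \cite{BW08}, and you have correctly fleshed out the standard reduction (rule out steady/expanding by integrating the trace, associate the self-similar Ricci flow, invoke the normalized-flow convergence theorems, and conclude that the fixed metric is itself round). The only step you might spell out further is the final passage from $C^\infty$-convergence of $\varphi_t^*g$ to $g_\infty$ to the isometry $g\cong g_\infty$; one clean way is to observe that $\sec(\varphi_t^*g)=\sec(g)\circ\varphi_t$ converges uniformly to the constant $\sec(g_\infty)$ and each $\varphi_t$ is surjective, forcing $\sec(g)$ to be that constant.
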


\section{Splitting Theorem of Gradient Ricci Soliton}\label{SplitThmPf}
In this section, we establish the constant rank Theorem \ref{SplGSRSThm1} for GRS with
nonnegative curvature via strong maximum principle.
We will show  that the distribution of the null space of the Ricci tensor is of
 constant dimension and is invariant under parallel translation.  That would yield a splitting
 theorem for GRS. Similar conclusion also holds for the curvature operator.

\subsection{Ricci curvature and constant ranking Theorem \ref{SplGSRSThm1}~\ref{SplStrRicThm}}\label{ProSplThmRic1}

Let $A=(a_{ij})_{n \times n}$ be a symmetric matrix. Define
\begin{equation}\label{DefSigk}
\det(I+tA)=\sum_{l=0}^n \sigma_l (A)t^l.
\end{equation}
Note that $\sigma_l (A)$ is a smooth function of variables $a_{ij}$.
When $A= \operatorname{diag}[\lambda_1,\cdots,\lambda_n]$ is a diagonal matrix, then  $\sigma_l (A)$
is the $l$-th elementary polynomial of $\lambda_1,\cdots,\lambda_n$.

If $A$ is any $n\times n$ symmetric matrix, we denote
\begin{equation}\label{DerSigEqu1}
\sigma_l^{ij}(A) := \frac{\partial\sigma_l(A)}{\partial a_{ij}},  \quad
\sigma_l^{ij,kl}(A) := \frac{\partial^2\sigma_l(A)}{\partial a_{ij}\partial a_{kl}}.
\end{equation}
In particular, we have
\[\sigma_1^{ij} (A)=\delta_{ij},\quad \sigma_2^{ij} (A)=(\sum_{k=1}^na_{kk})\delta_{ij}-a_{ij}.\]

We also denote by $(A|i)$ the $(n-1) \times(n-1)$ matrix obtained from $A$ by
deleting the $i$-th row and $i$-th column, and by $(A|ij)$ the $(n-2) \times(n-2)$ matrix obtained from
$A$ by deleting the $i$, $j$-th  row and $i$, $j$-th column.

The following two propositions (See Proposition \ref{DerSigRan1} and Proposition \ref{DerSigRan2} below) are well known (e.g., see \cite{BG09}).
\begin{proposition}\label{DerSigRan1}If $A$ is a
diagonal matrix. For any $l, i, j$ we have
\[\sigma_l^{ij} (A) =\left\{
                \begin{array}{ll}
                  \sigma_{l-1}(A|i), & \hbox{if $i=j$;} \\
                          0, & \hbox{otherwise.}
                \end{array}
              \right.
\]
and
 \[
\sigma_l^{ij,kl} (A)=\left\{
                \begin{array}{ll}
                  \sigma_{l-2}(A|ik), & \hbox{if $i=j$, $k=l$, $i\neq k$;} \\
                  -\sigma_{l-2}(A|ij), & \hbox{if $i=l$, $j=k$, $i\neq j$;} \\
                          0, & \hbox{otherwise.}
                \end{array}
              \right.
\]
\end{proposition}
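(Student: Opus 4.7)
The plan is to reduce both identities to elementary combinatorial calculations via the classical principal minor expansion
\begin{equation*}
\det(I+tA)=\sum_{S\subseteq\{1,\dots,n\}} t^{|S|}\det(A[S]),
\end{equation*}
where $A[S]$ is the principal submatrix of $A$ indexed by $S$. Matching powers of $t$ with \eqref{DefSigk} yields $\sigma_l(A)=\sum_{|S|=l}\det(A[S])$, so each derivative $\sigma_l^{ij}$ or $\sigma_l^{ij,kl}$ is a sum over $l$-subsets $S$ of the corresponding derivative of a single principal minor, evaluated at the diagonal matrix.

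For the first-order formula I would treat $i=j$ and $i\neq j$ separately. When $i=j$, cofactor expansion of $\det(A[S])$ along row $i$, combined with $a_{ik}=0$ for $k\neq i$, gives $\partial_{a_{ii}}\det(A[S])\big|_{\mathrm{diag}}=\prod_{k\in S\setminus\{i\}}\lambda_k$ whenever $i\in S$ and zero otherwise; summing over $l$-subsets containing $i$ reassembles exactly $\sigma_{l-1}(A|i)$. When $i\neq j$, cofactor expansion identifies $\partial_{a_{ij}}\det(A[S])$ with $\pm$ the minor obtained by deleting row $i$ and column $j$ from $A[S]$, but that minor contains the row of $A$ indexed by $j$ restricted to the columns $S\setminus\{j\}$, which is identically zero at a diagonal matrix.

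For the second-order identity I would switch to the Leibniz expansion
\begin{equation*}
\det(A[S])=\sum_{\pi\in\mathrm{Sym}(S)}\mathrm{sgn}(\pi)\prod_{s\in S}a_{s,\pi(s)},
\end{equation*}
and isolate the coefficient of $a_{ij}a_{kl}$. At a diagonal matrix only those $\pi$ with $\pi(s)=s$ for every $s\in S$ outside $\{i,j,k,l\}$ survive, so the relevant $\pi$ is determined entirely by its action on $\{i,j,k,l\}\cap S$. In the case $i=j$, $k=l$, $i\neq k$, the only admissible $\pi$ is the identity on $S\supseteq\{i,k\}$, producing $+\prod_{s\in S\setminus\{i,k\}}\lambda_s$ and assembling into $\sigma_{l-2}(A|ik)$. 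In the case $i=l$, $j=k$, $i\neq j$, the only admissible $\pi$ is the transposition $(ij)$, whose sign $-1$ yields $-\sigma_{l-2}(A|ij)$. In every remaining pairing (e.g.\ $i=j=k=l$, mixing one diagonal index with one off-diagonal index, or $\{i,j\}\neq\{k,l\}$ with $i\neq j$) some index in $S\setminus\{i,j,k,l\}$ or within $\{i,j,k,l\}$ is forced into an impossible assignment, so the derivative vanishes.

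The argument is essentially bookkeeping; no analytic input is needed beyond smoothness of $\sigma_l$ as a polynomial in the entries of $A$. The only delicate point is the minus sign in the case $i=l$, $j=k$, which arises from the single transposition $(ij)$ in the Leibniz expansion and is precisely what distinguishes $\sigma_l^{ij,ji}$ from the mixed diagonal case $\sigma_l^{ii,kk}$; keeping this sign straight is also what later makes the strong maximum principle applicable to the Ricci tensor in the proof of Theorem~\ref{SplGSRSThm1}.
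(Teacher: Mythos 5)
The paper does not prove Proposition~\ref{DerSigRan1}; it declares the two identities ``well known'' and cites \cite{BG09}, so there is no in-house proof to compare against. Your argument is correct and self-contained: the principal-minor expansion $\sigma_l(A)=\sum_{|S|=l}\det(A[S])$ reduces both identities to single minors, cofactor expansion handles the first derivative (with the $i\neq j$ case vanishing because the row of the minor indexed by $j$ is entirely off-diagonal and hence zero at a diagonal matrix), and the Leibniz expansion identifies precisely which permutations survive the second derivative at a diagonal matrix -- the identity in the $\sigma_l^{ii,kk}$ case and the transposition $(ij)$ in the $\sigma_l^{ij,ji}$ case, the latter supplying the minus sign. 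The only convention you leave implicit, and which is worth stating for completeness, is that $a_{ij}$ and $a_{ji}$ are treated as independent variables before restricting to the symmetric (diagonal) locus; this is the convention the paper uses, as confirmed by its formula $\sigma_2^{ij}(A)=(\sum_k a_{kk})\delta_{ij}-a_{ij}$, and your computations are consistent with it throughout.
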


Let $(M^n,g,f)$ be a GRS with nonnegative Ricci curvature as in Theorem \ref{SplGSRSThm1}.
In this case, we take $A=Ric$. We may assume that $ r := \min_{x\in M} \operatorname{rank} \operatorname{Ric}(x) <n$;
otherwise we have $\operatorname{Ric}>0$.
Let $x_0 \in M$ be a point such that $\operatorname{rank} \operatorname{Ric}(x_0)=r$.
Pick a small open neighborhood $\mathcal{O}$ of $x_0$. We define function $\phi$ on $\mathcal{O}$ by
\[
\phi (x) =\sigma_{r+1}(\operatorname{Ric} (x)).
\]
To prove Theorem \ref{SplGSRSThm1}~\ref{SplStrRicThm}, we first show that
there is a positive constant $C$ independent of $\phi$ such that on $\mathcal{O}$
\[
\Delta\phi(x) \leq C(\phi(x)+ |\nabla \phi (x) |).
\]

In the following we shall use the notations used in \cite{BG09}.
For any $x\in\mathcal{O}$, let $\lambda_1(x)\leq
\lambda_2(x)\leq \cdots \leq \lambda_n(x)$ be the eigenvalues of $\operatorname{Ric}(x)$. There is a positive constant $C_0>0$ depending $\mathcal{O}$, such that $\lambda_1(x)\leq \lambda_2(x)\leq \cdots \lambda_{n-r}(x) \leq \frac{C_0}{10^{100n}}$ and $C_0\leq \lambda_{n-r+1}(x)\leq \lambda_{n-r+2}(x)\leq \cdots \leq \lambda_n(x)$
for all $x\in \mathcal{O}$.
Let $G=\{n-r+1,n-r+2,\cdots,n\}$ and $B=\{1,\cdots,n-r\}$ be the ``good" and ``bad"
sets of indices for eigenvalues of $\operatorname{Ric}$, respectively.
Define diagonal matrix $\Lambda_G=\operatorname{diag}[0, \cdots,0, \lambda_{n-r+1},\lambda_{n-r+2},
\cdots,\lambda_{n}]$ and
$\Lambda_B=\operatorname{diag}[\lambda_{1},\cdots,\lambda_{n-r},0, \cdots, 0]$.  Use notation $h=O(k)$ if $|h(x)|\leq Ck(x)$ for $x\in \mathcal{O}$ with some positive constant $C$ under control. In particular,
$\lambda_i=O(\phi)$ for all $i\in B$, and
\begin{equation}
(\sum_{i\in B}\lambda_i)\sigma_{r}(\Lambda_G)=O(\phi). \label{ZeroOrdEqu1}
\end{equation}

Based on Proposition \ref{DerSigRan1}, with the notation as above, we have

\begin{proposition} \label{DerSigRan2} Let $A=Ric$ as above. Then we have that on $\mathcal{O}$
\[ \frac{\partial \sigma_{r+1}(A)}{\partial a_{ij}}=\left\{
                \begin{array}{ll}
                  \sigma_{r}(\Lambda_G)+O(\phi), & \hbox{if $i=j\in B$;} \\
                          O(\phi), & \hbox{otherwise.}
                \end{array}
              \right.
\]
and
\[
\frac{\partial^2\sigma_{r+1}(A) }{\partial a_{ij}\partial a_{kl}}=\left\{
                \begin{array}{ll}
                  \sigma_{r-1}(\Lambda_G|i)+O(\phi)=\frac{1}{\lambda_i}\sigma_{r}(\Lambda_G)+O(\phi), &
                  \hbox{if $i=j\in G,k=l\in B$;} \\
                   \sigma_{r-1}(\Lambda_G|k)+O(\phi)=\frac{1}{\lambda_k}\sigma_{r}(\Lambda_G)+O(\phi), &
                  \hbox{if $i=j\in B,k=l\in G$;} \\
                   \sigma_{r-1}(\Lambda_G)+O(\phi), & \hbox{if $i=j\in B,k=l\in B,i\neq k$;} \\
                   -\sigma_{r-1}(\Lambda_G|i)+O(\phi)= -\frac{1}{\lambda_i}\sigma_{r}(\Lambda_G)+O(\phi),
                   & \hbox{if $i=l\in G,j=k\in B$;} \\
                    -\sigma_{r-1}(\Lambda_G|j)+O(\phi)= -\frac{1}{\lambda_j}\sigma_{r}(\Lambda_G)+O(\phi),
                   & \hbox{if $i=l\in B,j=k\in G$;} \\
                   -\sigma_{r-1}(\Lambda_G)+O(\phi), & \hbox{if $i=l\in B,j=k\in B,i\neq j$;} \\
                          0, & \hbox{otherwise.}
                \end{array}
              \right.
\]
\end{proposition}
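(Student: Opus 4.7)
The plan is to reduce to the diagonal case via an orthonormal frame that diagonalizes $\operatorname{Ric}$ at each point of $\mathcal{O}$, then to apply Proposition \ref{DerSigRan1} and perform careful bookkeeping of which monomials in $\lambda_1,\dots,\lambda_n$ survive modulo $O(\phi)$.

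First I would fix $x\in\mathcal{O}$ and pick a frame diagonalizing $\operatorname{Ric}(x)=\operatorname{diag}[\lambda_1,\dots,\lambda_n]$ with $\lambda_1\le\cdots\le\lambda_n$, so that $\lambda_i\ge C_0$ for $i\in G$ and $0\le\lambda_i\le C_0/10^{100n}$ for $i\in B$. The first key point, needed for the $O(\phi)$ absorption later, is that every bad eigenvalue is itself $O(\phi)$. Since $\operatorname{Ric}\ge 0$, expanding
\[
\phi=\sigma_{r+1}(A)=\sum_{\substack{S\subset\{1,\dots,n\}\\ |S|=r+1}}\prod_{i\in S}\lambda_i,
\]
every $S$ must contain at least one index in $B$ because $|G|=r$. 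Separating the contribution where $S$ contains exactly one bad index gives $\phi=\sigma_r(\Lambda_G)\sum_{i\in B}\lambda_i+\mathcal{E}$, and $\mathcal{E}$ is a sum of nonnegative monomials each containing at least two bad factors. Since $\sigma_r(\Lambda_G)\ge C_0^r$ is bounded below, this forces $\sum_{i\in B}\lambda_i=O(\phi)$ and, by nonnegativity, $\lambda_i=O(\phi)$ for each $i\in B$; in particular \eqref{ZeroOrdEqu1} follows.

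Next I would substitute into the diagonal identities of Proposition \ref{DerSigRan1}. For the first derivative $\sigma_{r+1}^{ij}(A)=\sigma_r(A|i)\delta_{ij}$ I would expand $\sigma_r(A|i)$ as an elementary symmetric polynomial over the remaining eigenvalues and isolate the unique "all-good" monomial: if $i\in B$ then $A|i$ still contains all $r$ good eigenvalues, and the unique monomial using only good indices is $\prod_{j\in G}\lambda_j=\sigma_r(\Lambda_G)$, while every other monomial includes at least one bad factor and is $O(\phi)$; if $i\in G$ then only $r-1$ good eigenvalues remain, so every monomial is forced to include a bad factor and the whole expression is $O(\phi)$. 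The second derivative case is handled identically from the three nonzero branches in Proposition \ref{DerSigRan1}: each $\sigma_{r-1}(A|ik)$ or $\sigma_{r-1}(A|ij)$ splits into its unique good-only monomial (if one exists) plus an $O(\phi)$ remainder, and the simplification $\sigma_{r-1}(\Lambda_G|i)=\sigma_r(\Lambda_G)/\lambda_i$ for $i\in G$ is immediate from $\Lambda_G|i$ having exactly $r-1$ nonzero eigenvalues $\{\lambda_j:j\in G,\,j\ne i\}$.

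The main obstacle is not conceptual but combinatorial: one has to run through all the index-type combinations (both in $G$, both in $B$, mixed; diagonal versus off-diagonal second-derivative entries) and check in each case that the unique surviving monomial reproduces the displayed expression, while every competing monomial carries a bad factor and is absorbed into $O(\phi)$ via the estimate $\lambda_i=O(\phi)$ for $i\in B$ established above. Once this enumeration is carried out, both displays of the proposition follow directly.
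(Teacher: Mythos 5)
Your argument is correct and matches what the paper leaves implicit: diagonalize $\operatorname{Ric}$ at a point, use nonnegativity of $\operatorname{Ric}$ together with the lower bound $\sigma_r(\Lambda_G)\ge C_0^r$ to get $\lambda_i=O(\phi)$ for $i\in B$, then expand the minors from Proposition \ref{DerSigRan1} and absorb every monomial containing a bad eigenvalue into $O(\phi)$. The only cosmetic mismatch is that your bookkeeping (correctly) yields $O(\phi)$ rather than an exact $0$ when both diagonal indices lie in $G$, which is a slight imprecision in the proposition's "otherwise" line rather than a gap in your argument; this makes no difference to how the proposition is used in \eqref{SecOrdEqu1}.
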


From Proposition \ref{DerSigRan2},  we compute the first derivative
\begin{gather}
  \phi_\alpha=\sigma_{r+1}^{ij}R_{ij,\alpha}= \sigma_{r}(\Lambda_G)\sum_{i\in B}R_{ii,\alpha}+O(\phi),
\label{FirOrdEqu1}
\end{gather}
and the second derivative
\begin{eqnarray}\label{SecOrdEqu1}
  \phi_{\alpha\beta}&=&\sigma_{r+1}^{ij}R_{ij,\alpha\beta}+\sigma_{r+1}^{ij,kl}R_{ij,\alpha}R_{kl,\beta}\nonumber\\
   &= &\sigma_{r}(\Lambda_G)\sum_{i\in B}R_{ii,\alpha\beta} + \sum_{i\in G}\sum_{k\in B}\frac{1}{\lambda_i}\sigma_{r}(\Lambda_G)R_{ii,\alpha}R_{kk,\beta} +  \sum_{i\in B}\sum_{k\in G }\frac{1}{\lambda_k}\sigma_{r}(\Lambda_G)R_{ii,\alpha}R_{kk,\beta}\nonumber\\
   &&+ \sum_{i,k\in B}\sigma_{r-1}(\Lambda_G)R_{ii,\alpha}R_{kk,\beta}- 2 \sum_{i\in G}\sum_{j\in B}\frac{1}{\lambda_i}\sigma_{r}(\Lambda_G)R_{ij,\alpha}R_{ji,\beta}\nonumber\\
   &&-\sum_{i,j\in B}\sigma_{r-1}(\Lambda_G)R_{ij,\alpha}R_{ji,\beta}+O(\phi).
   \end{eqnarray}
Take trace of \eqref{SecOrdEqu1} and using \eqref{FirOrdEqu1}, we get
\begin{equation}\label{SigRanDifIne1}
\begin{split}
\Delta\phi & =\sigma_{r}(\Lambda_G)\sum_{i\in B}\Delta R_{ii} +2\left ( \sum_{i \in G}
 \frac{1}{\lambda_i}R_{ii,\alpha} \right ) \phi_\alpha - 2 \sigma_{r}(\Lambda_G)\sum_{i\in
G}\sum_{j\in B}\frac{1}{\lambda_i}|\nabla R_{ij}|^2\\
&\quad +  \frac{\sigma_{r-1}(\Lambda_G)}{ \sigma_{r}^2(\Lambda_G)}
\phi_\alpha^2 -\sigma_{r-1}
(\Lambda_G) \sum_{i, j\in B}|\nabla R_{ij}|^2+O(\phi)\\
& =\sigma_{r}(\Lambda_G)\sum_{i\in B}\Delta R_{ii}  - 2 \sigma_{r}(\Lambda_G)\sum_{i\in
G}\sum_{j\in B}\frac{1}{\lambda_i}|\nabla R_{ij}|^2-\sigma_{r-1}
(\Lambda_G) \sum_{i, j\in B}|\nabla R_{ij}|^2\\
&\quad+O(\phi)+O(|\nabla\phi|).
\end{split}
\end{equation}
By identity \eqref{LapRicSolEqu},
\begin{equation}\label{SigRanDifIne2}
\begin{split}
\Delta\phi
=& \sigma_{r}(\Lambda_G)\sum_{i\in B}(\nabla_{\nabla f}R_{ii}+2\lambda R_{ii}
-2\overset{\circ}{R}(\operatorname{Ric})_{ii})\\
&- 2 \sigma_{r}(\Lambda_G)\sum_{i\in G}\sum_{j\in B}\frac{1}{\lambda_i}|\nabla R_{ij}|^2- \sigma_{r-1}(
\Lambda_G)\sum_{i,j\in B}|\nabla R_{ij}|^2+O(\phi)+O(|\nabla\phi|).
\end{split}
\end{equation}

To deal with the first term in the righthand side of \eqref{SigRanDifIne2}, by \eqref{ZeroOrdEqu1}
and  \eqref{FirOrdEqu1} we have
\begin{equation}\label{LapRic1Ord1}
\sigma_{r}(\Lambda_G)\sum_{i\in B}(\nabla_{\nabla f}R_{ii}+2\lambda R_{ii})=
O(\phi)+O(|\nabla\phi|).
\end{equation}
By the assumption of nonnegative sectional curvature \footnote{This is the only place that we need
nonnegative sectional curvature.},
\begin{equation}\label{LapRicQuaCur1}
\overset{\circ}{R}(\operatorname{Ric})_{ii}=\sum_kR(e_i, \operatorname{Ric}(e_k),e_i,e_k)
\geq \lambda_1\sum_kR(e_i,e_k,e_i,e_k)\geq 0.
\end{equation}
Combine \eqref{SigRanDifIne2}, \eqref{LapRic1Ord1}, and \eqref{LapRicQuaCur1},
\begin{equation}\label{SigRanDifIne3}
\begin{split}
\Delta\phi
&\leq C(\phi+ |\nabla\phi|) - 2 \sigma_{r}(\Lambda_G)\sum_{i\in G}\sum_{j\in B}\frac{1}{
\lambda_i}|\nabla R_{ij}|^2- \sigma_{r-1}( \Lambda_G)\sum_{i,j\in B}|\nabla R_{ij}|^2.
\end{split}
\end{equation}

Hence we have proved
\[\Delta\phi\leq C(\phi+|\nabla\phi|).\]
Since $\phi \geq 0$ on $\mathcal{O}$ and $\phi(x_0)=0$, it follows from the strong maximum principle that
$ \phi\equiv 0$ on $\mathcal{O}$. We conclude that  $\phi\equiv 0$ in $M$, i.e. $\operatorname{rank}
 \operatorname{Ric} \equiv r$.

Next we consider the null space of Ricci curvature $\operatorname{null} \operatorname{Ric}$.
It follows from \eqref{SigRanDifIne3} that
\begin{equation}
2 \sigma_{r}(\Lambda_G)\sum_{i\in G}\sum_{j\in B}\frac{1}{\lambda_i}|\nabla R_{ij}|^2+  \sigma_{r-1}(
\Lambda_G)\sum_{i,j\in B}|\nabla R_{ij}|^2\equiv 0.
\end{equation}
Hence for any $v\in \operatorname{null} \operatorname{Ric}$, $\nabla \operatorname{Ric}(v)=0$.
On the other hand, for any section $v \in \operatorname{null} \operatorname{Ric}$ and for any index $k$
we have
\[0=\nabla_{k}(R_{ij}v^i)=(\nabla_kR_{ij})v^j +R_{ij}\nabla_kv^i, \]
thus $R_{ij}\nabla_kv^i = -(\nabla_k R_{ij})v^j=0$.
This shows that $\nabla_k v \in \operatorname{null} \operatorname{Ric}$ and that $\operatorname{null}
\operatorname{Ric}$ is invariant under parallel translation.

Finally we show that the universal covering space of the GRS $(M,g)$ splits.
Since the distribution $\operatorname{null} \operatorname{Ric}$ is invariant under parallel translation,
 $\operatorname{null} \operatorname{Ric}$
is involutive. Let $(\operatorname{null} \operatorname{Ric})^\perp$ be the distribution that generated by orthogonal complements of $\operatorname{null} \operatorname{Ric}$. For any sections $X, Y\in(\operatorname{null}
\operatorname{Ric})^\perp$, $V\in \operatorname{null}\operatorname{Ric}$, then
\[g([X,Y],V)=g(\nabla_XY-\nabla_YX,V)=-g(Y,\nabla_XV)+g(X,\nabla_YV)=0.\]
Thus  the distribution $(\operatorname{null} \operatorname{Ric})^\perp$ is also involutive.
The classical deRham splitting theorem (see \cite[Theorem 10.43]{Besse87}) implies that $(M,g)$ locally splits.

Now consider the the universal covering space $(\widetilde{M},\tilde{g})$.
We denote by $L$ the leaf of the integral manifold of $\operatorname{null} \operatorname{Ric}$,
then $L$ is Ricci flat. By equation \eqref{GraShrRicciSolitonEqu1}, on every leaf,
${\operatorname{Hess}}f=\frac{1}{2} g $.
Consequently,  $L$ is isometric to $\mathbb{R}^{n-r}$. Hence $(\widetilde{M},\tilde{g})=(N,h)\times
\mathbb{R}^{n-r}$ split isometrically  along the null space of Ricci curvature, where $(N,h)$ has
 strictly positive Ricci curvature. We have finished the proof of Theorem \ref{SplGSRSThm1}~\ref{SplStrRicThm}.

\subsection{Curvature operator and constant rank Theorem \ref{SplGSRSThm1}(II)}
Similarly,  we can establish the constant rank theorem Theorem \ref{SplGSRSThm1}~\ref{SplStrCurOpeThm} for curvature operators.
We may assume that $r:= \min_{x\in M}{\rm rank\,}\mathfrak{R}(x)< \frac{n(n-1)}
{2}$. There is a point $x_0\in M$ such that ${\rm rank\,}\mathfrak{R}(x_0) =r$.
Pick an orthonormal basis $\{ e_i \}$ around $x_0$.  Let $\{\varphi_\alpha=\varphi_\alpha^{ij}e_i\wedge e_j\}$
be the eigenvectors of curvature operator $\mathfrak{R}$, i.e.
$\mathfrak{R}(\varphi_\alpha)=\lambda_\alpha\varphi_\alpha$.
Define $\phi=\sigma_{r+1}(\mathfrak{R})$.

Below we adopt notations similar to the ones used in section \ref{ProSplThmRic1}.
Using Proposition \ref{DerSigRan2} and equation \eqref{LapCurOpeSolEqu} and by a computation similar to
the derivation of (\ref{SigRanDifIne2}) we get
\begin{equation}\label{SigCRRanDifIne1}
\begin{split}
\Delta\phi& \leq C(\phi+ |\nabla \phi|)+ \sigma_{r}(\Lambda_G)\sum_{\alpha\in B} \left (\nabla_{\nabla f}\mathfrak{R}_{
\alpha\alpha}+  2\lambda \mathfrak{R}_{\alpha\alpha}-\mathfrak{R}^2_{\alpha\alpha} - \mathfrak{R}^\sharp_{
\alpha\alpha} \right )\\
&\quad -2 \sigma_{r}(\Lambda_G)\sum_{\alpha\in G}\sum_{\beta\in B}\frac{1}{\lambda_\alpha}|\nabla \mathfrak{R}_{\alpha\beta}|^2- \sigma_{r-1}(\Lambda_G)\sum_{\alpha, \beta\in B}|\nabla \mathfrak{R}_{
\alpha\beta}|^2 \\
&\leq C(\phi+ |\nabla \phi|) - 2 \sigma_{r}(\Lambda_G)\sum_{\alpha\in G}\sum_{\beta\in B}\frac{1}{\lambda_\alpha}
|\nabla \mathfrak{R}_{\alpha\beta}|^2- \sigma_{r-1}(\Lambda_G)\sum_{\alpha, \beta\in B}
|\nabla \mathfrak{R}_{\alpha\beta}|^2\\
&  \leq C(\phi+ |\nabla \phi|),
\end{split}
\end{equation}
in some small neighborhood $\mathcal{O}$ of $x_0$.
To get the second inequality above we have used the following
\begin{eqnarray*}
  \langle\mathfrak{R}^\sharp(\varphi_\alpha), \varphi_\alpha \rangle = \sum_{\alpha, \beta}\lambda_\beta\lambda_\gamma\langle[\varphi_\beta,\varphi_\gamma],\varphi_\alpha\rangle^2
  \geq 0,
\end{eqnarray*}
which follows from \eqref{PosQuaCurOpe1} and the
assumption of the nonnegative curvature operator.

Since $\phi\geq 0$, and $\phi(x_0)=0$, by applying the strong maximum principle to \eqref{SigCRRanDifIne1} we get
$\phi\equiv 0$.
We conclude that curvature operator $\mathfrak{R}$ has constant rank.

By a similar proof as for the Ricci curvature case the null space of $\mathfrak{R}$ is invariant under
parallel transilation.
Moreover, it follows from (\ref{LapCurOpeSolEqu}) that $\operatorname{null}\mathfrak{R} \subset
\operatorname{null}\mathfrak{R}^\sharp$. By \eqref{PosQuaCurOpe1} we have $ \langle[\varphi_\beta,
\varphi_\gamma], \phi\rangle=0$ for $\phi\in \operatorname{null} \mathfrak{R}$ and
for any $\beta\neq\gamma$ with $\lambda_\beta>0$ and $\lambda_\gamma>0$.
Since $\mathfrak{R}$ is a self-adjoint operator,
\[
\phi\in \operatorname{image} \mathfrak{R}\Leftrightarrow \langle \phi, \varphi_\alpha\rangle=0,
\quad \forall~ \alpha {~\rm with~}\lambda_\alpha=0.
\]
For any section $\phi, \omega \in \operatorname{image} \mathfrak{R}$, we have
\[
\langle[\phi,\omega],\psi\rangle=\sum_{\beta,\gamma} \langle\phi,\varphi_\beta\rangle \langle \omega,\varphi_\gamma\rangle\langle[\varphi_\beta,\varphi_\gamma],\psi \rangle
 =0, \quad \forall \psi\in \operatorname{null} \mathfrak{R},
 \]
hence $[\phi,\omega]\in \operatorname{image} \mathfrak{R}$.
This implies that the image of $\mathfrak{R}$ is a Lie subalgebra.
Ambrose-Singer theorem ensures that the Lie algebra  $\mathfrak{hol}(M,g)$ of Holonomy group
is reduced to a lower dimension, so by deRham splitting theorem (see \cite[Theorem 10.43]{Besse87}) the universal covering space
$(\widetilde{M},\tilde{g})$ is a Riemannian product. Since one of the product factor is flat from our
  construction, $(\widetilde{M},\tilde{g})=(N,h)\times \mathbb{R}^{n-m}$, $\frac{m(m-1)}{2}=r$, splits isometrically,
where  $(N,h)$ has strictly positive curvature operator.
The proof of Theorem \ref{SplGSRSThm1} is completed.

\section{Mean Curvature Growth Estimate}\label{MeaCurGroEstSec}

In this section, we establish the following  a priori interior estimate of the mean curvature for a
convex hypersurface in $\mathbb R^{n+1}$. As a consequence, we can control the mean curvature growth for embedded codimension one GRS in $\mathbb R^{n+1}$, see \eqref{meaIntWeiMea1} and \eqref{MeaIntWeiMea1}.

Let $X: M^n \rightarrow \mathbb{R}^{n+1}$ be a hypersurface with induced metric $g$ and (outer) unit normal $\nu$. Let $\{e_1,\cdots,e_n\}$ be a local orthonormal frame filed on $M$, then
\begin{equation}\label{FirDerPosNorHyp}
X_{ij} = -h_{ij}\nu, \quad  1\leq i, j\leq n,
\end{equation}
where  $h=(h_{ij})$ is the second fundamental form. Let $\sigma_k=\sigma_k(h)$ be the $k$-th elementary symmetric function of the eigenvalues of $h$. In particular, $H=\sigma_1(h)$ and $R=2\sigma_2(h)$ are the mean curvature and the scalar curvature respectively. If the scalar curvature of $M$ is positive, we take the unit normal $\nu$ such that $h$ lies in Garding's  $\Gamma_2$-cone. In particular, the differential operator
\begin{equation}\label{EllipticSig2Ope}
\square_h:=\sigma_2^{ij}(h)\nabla_{ij}^2
\end{equation} is elliptic, where $\sigma_2^{ij}(h)$ is defined in \eqref{DerSigEqu1}.

\begin{theorem}\label{IntEstMeaCurConLem}
Let $X : (M^n, g)\rightarrow \mathbb{R}^{n+1}$ be a convex hypersurface with positive scalar curvature. If there exists a unit constant vector $a$ such that $\langle X, a\rangle$ is a nonegative proper function, then we have
the interior estimate
\begin{equation}\label{IntEstMeaCurCon}
H(x)\leq C(n)\sup_{\{y \, | \,\langle X(y), a\rangle\leq 2\langle X(x),a\rangle \}}
(1+R^2(y)+ \frac{1}{R(y)}
+ \frac{1}{R^2(y)}|\nabla R|^2 (y)+ \frac{1}{R(y)}|\Delta R|(y) ).
\end{equation}
\end{theorem}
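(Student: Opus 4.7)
My plan is to apply a maximum principle argument to a localized auxiliary function, using the fact that the Cheng--Yau-type operator $\square_h=\sigma_2^{ij}\nabla_{ij}$ is degenerate elliptic on a convex hypersurface: the matrix $\sigma_2^{ij}=H\delta_{ij}-h_{ij}$ has eigenvalues $\sum_{k\neq i}\lambda_k\geq 0$. Let $\rho(y)=\langle X(y),a\rangle$; since $\rho$ is nonnegative and proper, $\Omega:=\{\rho\leq 2t_0\}$ with $t_0=\rho(x)$ is compact. I would take a cutoff $\eta(s)=(2t_0-s)_+^p$ with $p\geq 2$ and set $\Phi=\eta(\rho)\,H$. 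Then $\Phi\geq 0$ on $\Omega$ with $\Phi|_{\partial\Omega}=0$, so $\Phi$ attains its maximum at some interior $y^*$. The conclusion follows by bounding $H(y^*)$ pointwise by the right-hand side of \eqref{IntEstMeaCurCon} at $y^*$ and then using $\Phi(x)\leq\Phi(y^*)$ together with $\eta(t_0)/\eta(\rho(y^*))\geq 2^{-p}$.

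Two structural identities feed the argument. First, from $\nabla_{ij}\rho=-h_{ij}\langle\nu,a\rangle$ and $\sigma_2^{ij}h_{ij}=2\sigma_2=R$ one gets $\square_h\rho=-R\langle\nu,a\rangle$, and the critical-point condition $\nabla\Phi(y^*)=0$ forces $\nabla H=-(H\eta'/\eta)\,a^T$. Expanding $\square_h\Phi(y^*)\leq 0$ then yields the upper bound
\[
\eta\,\square_h H \leq \Bigl(\tfrac{2(\eta')^2}{\eta}-\eta''\Bigr)H\cdot\sigma_2^{ij}(a^T)_i(a^T)_j - \eta'\,R\,H\,\langle\nu,a\rangle,
\]
with $\sigma_2^{ij}(a^T)_i(a^T)_j\leq (n-1)H$. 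Second, using the Simons identity $\Delta h_{ij}=\nabla_{ij}H+Hh_{ik}h_{kj}-|h|^2h_{ij}$ in $\mathbb R^{n+1}$ together with $R=H^2-|h|^2$, a direct calculation produces the matching lower bound
\[
\square_h H = \tfrac{1}{2}\Delta R + \bigl(|\nabla h|^2-|\nabla H|^2\bigr) + \bigl(H\operatorname{tr}(h^3)-|h|^4\bigr),
\]
in which $H\operatorname{tr}(h^3)-|h|^4=\sum_{i<j}\lambda_i\lambda_j(\lambda_i-\lambda_j)^2\geq 0$ by convexity. The identity $\nabla R=2H\nabla H-2h_{ij}\nabla h_{ij}$ together with $|\nabla H|^2\leq n|\nabla h|^2$ couples the gradient quantities.

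Combining these bounds at $y^*$ gives a scalar inequality mixing $H$, $R$, $|\nabla R|$, $\Delta R$, and the cutoff $\eta$. The critical-point relation $|\nabla H|^2\leq H^2(\eta')^2/\eta^2$ eliminates extraneous gradient quantities, and after repeated Young's inequality to separate the scales $R^2$, $R^{-1}$, $|\nabla R|^2R^{-2}$, $|\Delta R|R^{-1}$, one extracts the pointwise bound on $H(y^*)$ claimed in \eqref{IntEstMeaCurCon}.

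The hard part will be this final algebraic closing. The max-principle inequality contains cross terms of the type $\eta RH^2$, $\eta^{1/2}HR$ and $\eta\Delta R$ that must be carefully split into parts absorbed by the "good" terms on the left and parts that contribute to the various powers of $R$ on the right. The degenerate behavior on flat directions --- where $H\operatorname{tr}(h^3)-|h|^4$ vanishes, as for a cylinder $S^k\times\mathbb R^{n-k}$ --- means one cannot exploit this positivity quantitatively, and the extraction of the specific scale $R^2+R^{-1}$ depends on a careful tuning of the cutoff exponent $p$ and a subtle Young's-inequality split that assigns each mixed term to the correct slot on the right-hand side.
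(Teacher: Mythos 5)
Your overall strategy -- a cutoff $\eta(\rho)$ times the mean curvature, a maximum-principle argument at an interior maximum, and the Cheng--Yau identity for $\square_h H$ -- is the same route the paper takes (the paper uses $\phi^2\sigma_1$ and passes to $\zeta=2\ln\phi+\ln\sigma_1$; your identity $\square_h H=\tfrac12\Delta R+(|\nabla h|^2-|\nabla H|^2)+(H\operatorname{tr}(h^3)-|h|^4)$ is exactly \eqref{EllEquMeaCur1} rewritten via Newton's identities). But the proposal, as written, has a genuine gap precisely at the place you flag as ``the hard part,'' and the tool you mention for closing it does not do the job.

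The obstruction is the term $|\nabla h|^2-|\nabla H|^2$, which has no sign. The only inequality you invoke, $|\nabla H|^2\le n|\nabla h|^2$, runs the wrong way: it yields $|\nabla h|^2-|\nabla H|^2\ge(1-n)|\nabla h|^2$, a useless negative lower bound. The decisive ingredient that the paper uses and you omit is G\aa rding's hyperbolic-polynomial inequality, in the form $-\sum_{i\ne j}h_{ii,k}h_{jj,k}\ge-\tfrac12\,|\sigma_{2,k}|^2/\sigma_2$ (see \eqref{GraBelGraSca1}); this is what converts the indefinite term into $2\sum_{i\ne j}h_{ii,j}^2-\tfrac12|\nabla\sigma_2|^2/\sigma_2$ as in \eqref{GraSFFMeaDiff2}, producing the good nonnegative term $2\sum_{i\ne j}h_{ii,j}^2$ that must be fought against $\sigma_2^{ij}\sigma_{1,i}\sigma_{1,j}/\sigma_1^2$ coming from the cutoff. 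Without it, the gradient terms do not close, and no amount of Young's inequality will produce the stated $R^{-2}|\nabla R|^2$ dependence.

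Two further points. First, your diagnosis of where the estimate is hard is off: you worry about the vanishing of $H\operatorname{tr}(h^3)-|h|^4$ for the generalized cylinder, but that is a harmless case (both $R$ and $H$ are bounded there and the term is simply dropped). The genuine difficulty, and the reason the paper splits into Cases I--III, is the nearly rank-one regime $\lambda_n\gg1$, $\lambda_{n-1}<\lambda_n^{-1/2}$, where $H\sim\lambda_n$ can be enormous while $R=2\sigma_2$ is of order $\lambda_n^{1/2}$ or smaller. That regime is where all the estimates \eqref{ConGraTermEst1}--\eqref{NeaCalGraSFF1} live, and it is where the coefficient $2-\tfrac32(1+\delta)$ in front of $\sum_{i\ne j}h_{ii,j}^2$ must be kept positive -- a balance your generic-$p$ cutoff and generic Young splitting give no control over. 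Second, a small sign slip: expanding $\sigma_2^{ij}\nabla_{ij}\Phi\le0$ at the maximum gives $+\eta'HR\langle\nu,a\rangle$ on the right, not $-\eta'RH\langle\nu,a\rangle$; with $\eta'<0$ this is in fact a favorable (nonpositive) term, so the sign matters for bookkeeping even if it is not fatal.
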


We note that on a shrinking GRS, one can split out lines and reduce the GRS to be a convex hypersurface such that there  exists  automatically a vector $a$ such that $\langle X, a\rangle$ is a nonnegative proper function, see \eqref{CompactSet}.

First of all, the following identity is well known (see, for example, (2.11) in \cite{CY77}) and will be used
to prove the theorem above.

\begin{lemma} Let $X: M^n \rightarrow \mathbb{R}^{n+1}$ be a hypersurface with the second fundamental form $h$, then \begin{equation}\label{EllEquMeaCur1}
\square \sigma_1:=\sigma_2^{ij}\sigma_{1,ij}=\Delta\sigma_2+|\nabla h|^2-|\nabla \sigma_1|^2+2\sigma_2|h|^2-(\sigma_1\sigma_2-3\sigma_3)\sigma_1,
 \end{equation}
 where $( \sigma_2^{ij}):= (\frac{\partial \sigma_2}{\partial h_{ij}})$ is defined in \eqref{DerSigEqu1}.
\end{lemma}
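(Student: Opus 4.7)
The plan is to expand both sides and reduce the identity to Newton's identities for the principal curvatures. Since $\sigma_2$ is quadratic in $h$, I would first write $\sigma_2^{ij} = \sigma_1\delta_{ij} - h_{ij}$, so that
\[
\square\sigma_1 = \sigma_1\Delta\sigma_1 - h_{ij}\sigma_{1,ij},
\]
and I would use $\sigma_2 = \tfrac{1}{2}(\sigma_1^2 - |h|^2)$ to get
\[
\Delta\sigma_2 = \sigma_1\Delta\sigma_1 + |\nabla\sigma_1|^2 - h_{ij}\Delta h_{ij} - |\nabla h|^2.
\]
Subtracting, the identity reduces to showing
\[
h_{ij}\sigma_{1,ij} - h_{ij}\Delta h_{ij} = H\,\mathrm{tr}(h^3) - |h|^4
\]
(matched against the right-hand side via Newton's identities).

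Next, I would invoke the standard Simons-type identity for a hypersurface in $\mathbb{R}^{n+1}$,
\[
\Delta h_{ij} = \nabla_i\nabla_j H + H(h^2)_{ij} - |h|^2 h_{ij},
\]
which is obtained by applying Codazzi ($h_{ij,k}=h_{ik,j}$) to commute $\nabla_k\nabla_k h_{ij}$ to $\nabla_i\nabla_k h_{kj} = \nabla_i\nabla_j H$ plus a curvature commutator, and then substituting the Gauss equation $R_{ijkl} = h_{ik}h_{jl} - h_{il}h_{jk}$ to evaluate that commutator. Tracing against $h_{ij}$ gives
\[
h_{ij}\Delta h_{ij} = h_{ij}\sigma_{1,ij} + H\,\mathrm{tr}(h^3) - |h|^4,
\]
so the intermediate identity above is immediate, yielding
\[
\square\sigma_1 = \Delta\sigma_2 + |\nabla h|^2 - |\nabla\sigma_1|^2 + H\,\mathrm{tr}(h^3) - |h|^4.
\]

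Finally, I would convert the polynomial $H\,\mathrm{tr}(h^3) - |h|^4$ in power sums to elementary symmetric functions. Using Newton's identities $|h|^2 = \sigma_1^2 - 2\sigma_2$ and $\mathrm{tr}(h^3) = \sigma_1|h|^2 - \sigma_1\sigma_2 + 3\sigma_3$, a short computation gives
\[
H\,\mathrm{tr}(h^3) - |h|^4 = 2\sigma_2|h|^2 - \sigma_1(\sigma_1\sigma_2 - 3\sigma_3),
\]
completing the identity.

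The only mildly delicate step is the Simons identity, which depends on being careful with the sign conventions in the Ricci commutator and the Gauss equation; everything else is algebraic manipulation of $\sigma_k$'s. Since the ambient space is flat, no extrinsic curvature terms appear and the derivation of the Simons identity is the textbook one, so I do not foresee any real obstacle.
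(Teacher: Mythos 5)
The paper does not prove this lemma; it is stated as well known with a pointer to Cheng--Yau \cite{CY77}, eq.\ (2.11), so there is no in-paper argument to compare against. Your derivation is correct and is precisely the classical Cheng--Yau route: expand $\sigma_2^{ij}=\sigma_1\delta_{ij}-h_{ij}$ and $\sigma_2=\tfrac12(\sigma_1^2-|h|^2)$, cancel the $\sigma_1\Delta\sigma_1$, $|\nabla h|^2$, $|\nabla\sigma_1|^2$ terms, invoke the flat-ambient Simons identity $\Delta h_{ij}=\nabla_i\nabla_j H+H(h^2)_{ij}-|h|^2 h_{ij}$ traced against $h$, and convert power sums to elementary symmetric functions via Newton's identities. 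One small slip: the displayed intermediate identity should read $h_{ij}\Delta h_{ij}-h_{ij}\sigma_{1,ij}=H\,\mathrm{tr}(h^3)-|h|^4$ (signs reversed from what you wrote), which is what the Simons trace actually gives and what the cancellation requires; your very next line and the final conversion are stated with the correct signs, so the overall argument goes through.
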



To prove Theorem \ref{IntEstMeaCurConLem}, let $\phi(x)= r -\langle X(x), a\rangle$ be a cut off function with $r \geq 1$, we
will apply second derivative test to the auxiliary function
\[
\phi^2(x)\sigma_1(x)
\]
in the domain $\Omega_{r}: = \{x \in M | \, \langle X(x),
a \rangle\leq r \}$ to estimate $\sigma_1(x)$.

We may assume that $\phi^2 \sigma_1$ achieves its maximum at an interior point $\bar{x} \in \Omega_r$.
Let $0\leq \lambda_1(x)\leq \lambda_2(x)\leq \cdots \leq \lambda_n(x)$ be the principle
curvature of $M$ at $x \in M$. Moreover, in a neighborhood of $\bar{x}$, we choose a local orthonormal frame
$\{ e_i \}$ such that $ h_{ij}(\bar{x})=\lambda_i(\bar{x})\delta_{ij}$.

\vskip .1cm
We consider three cases.
\begin{enumerate}[label=Case (\Roman*)]
  \item : $\lambda_n(\bar{x})\leq \max\{n^2, 100n\}$. Then $\sigma_1(\bar{x})\leq C(n)$, and thus
\begin{equation}\label{IntEstMeaCurCaseI}
 \phi^2(\bar{x})\sigma_1(\bar{x})\leq C(n) r^2.
 \end{equation}
  \item : $\lambda_n(\bar{x})> \max\{n^2, 100n\}$ and $\lambda_{n-1}(\bar{x})\geq \lambda_n^{-\frac{1}{2}}
  (\bar{x})$, then the scalar curvature $R(\bar{x})=\sum_{i\neq j}\lambda_i\lambda_j\geq \lambda_n\lambda_{n-1} \geq \lambda_n^{\frac{1}{2}}(\bar{x})$ and $\sigma_1(\bar{x})=\sum_{i=1}^n\lambda_i\leq n\lambda_n\leq nR^2(\bar{x})$. Hence
\begin{equation}\label{IntEstMeaCurCaseII}
 \phi^2(\bar{x})\sigma_1(\bar{x})\leq nR^2(\bar{x}) r^2 .
 \end{equation}
 \item\label{MeaEstCaseIII} : $\lambda_n(\bar{x})> \max\{n^2, 100n\}$ and $\lambda_{n-1}(\bar{x}) < \lambda_n^{-\frac{1}{2}}
 (\bar{x})$. Then $\lambda_i(\bar{x}), i\neq n$ is much smaller than $\lambda_n(\bar{x})$. In this case, we have
       \begin{equation}\label{MeaMaxEigCom1}
       \lambda_i<\frac{1}{n^3}\lambda_n,\, \, i \neq n,\quad \hbox{and} \quad \lambda_n< \sigma_1=\sum_{i=1}^n\lambda_i<(1+\frac{1}{n^2})\lambda_n.
       \end{equation}
\end{enumerate}

To estimate $\phi^2(\bar{x})\sigma_1(\bar{x})$ from above, we are left to consider case \ref{MeaEstCaseIII}.
Note that the function
\[\zeta:=\ln\big(\phi^2\sigma_1\big)=2\ln\phi+\ln \sigma_1\]
on $\Omega_{r}$ also achieve the maximum at $\bar{x}$.
Apply the first and second derivative test, we get that at $\bar{x}$
\begin{equation}\label{GraAuxZer3}
0=\zeta_i(\bar{x})=2\frac{\phi_i}{\phi}+\frac{\sigma_{1,i}}{\sigma_1},  \quad \forall~ i=1, \cdots,n.
\end{equation}
and the matrix
\begin{equation}\label{HesAuxCon3}
\begin{split}
0&\geq \zeta_{ij}(\bar{x})\\
&=2\frac{\phi_{ij}}{\phi}-2\frac{\phi_i\phi_j}{\phi^2}+\frac{\sigma_{1,ij}}{\sigma_1}
-\frac{\sigma_{1,i}\sigma_{1,j}}{\sigma_1^2}\\
&=2\frac{\phi_{ij}}{\phi}+\frac{\sigma_{1,ij}}{\sigma_1}
-\frac{3}{2}\frac{\sigma_{1,i}\sigma_{1,j}}{\sigma_1^2},
\end{split}
\end{equation}
where we used \eqref{GraAuxZer3} in the last step.

Note that the positive scalar curvature on $M$ implies that the operator $\sigma_2^{ij}\nabla_{ij}^2$ is elliptic, see \eqref{EllipticSig2Ope}. Take the contraction of \eqref{HesAuxCon3} with $\sigma_2^{ij}$  and use \eqref{EllEquMeaCur1},
we get that at $\bar{x}$
\begin{equation}\label{ConAulMeaEqu1}
 \begin{split}
0&\geq  \sigma_2^{ij}\zeta_{ij} \\
&=2\frac{\sigma_2^{ij}\phi_{ij}}{\phi}+\frac{\Delta\sigma_2+|\nabla h|^2-|\nabla \sigma_1|^2+2\sigma_2|h|^2-(\sigma_1\sigma_2-3\sigma_3)\sigma_1}{\sigma_1}
 - \frac{3}{2} \frac{\sigma_2^{ij}\sigma_{1,i}\sigma_{1,j}}{\sigma_1^2}.
\end{split}
\end{equation}
Below we will deal with the three terms in the righthand side of \eqref{ConAulMeaEqu1} separately.
All the related calculations are at point $\bar{x}$.

For the first term, from \eqref{FirDerPosNorHyp}, we have
\begin{equation}\label{SquPsoConaFunEqu1}
\sigma_2^{ij}\phi_{ij}=-\sigma_2^{ij}\langle X_{ij},a\rangle=2\sigma_2\langle \nu, a\rangle.
\end{equation}

To deal with the second term, note that
\begin{equation}\label{GraSFFMeaDiff1}
\begin{split}
|\nabla h|^2-|\nabla \sigma_1|^2&=\sum_{i,j,k} h_{ij,k}^2-\sum_{i,j,k} h_{ii,k}h_{jj,k}\\
&=\sum_k\sum_{i\neq j} h_{ij,k}^2-\sum_k\sum_{i\neq j} h_{ii,k}h_{jj,k}\\
&=2\sum_{i\neq j} h_{ii,j}^2+\sum_{ i\neq j\neq k} h_{ij,k}^2-\sum_k\sum_{i\neq j} h_{ii,k}h_{jj,k},
\end{split}
\end{equation}
where we have used the Codazzi equation $h_{ij,k}=h_{ik,j}$ to get the last equality.
The term $2\sum_{i\neq j} h_{ii,j}^2$ in \eqref{GraSFFMeaDiff1} will play a crucial role in our estimate.
The term  $\sum_{ i\neq j\neq k } h_{ij,k}^2$ can be discarded.
However we need to control the negative term $-\sum_k\sum_{i\neq j} h_{ii,k}h_{jj,k}$.
In fact, by G{\aa}rding's theory on hyperbolic
polynomial (see  \cite[Lemma 3.2]{GLL} or \cite[Lemma 2.2]{GRW}),  we have
\begin{equation}\label{GraBelGraSca1}
-\sum_{i\neq j}h_{ii,k}h_{jj,k}\geq -\frac{1}{2}\frac{ |\sigma_{2,k}| ^2}{\sigma_2},
 ~\forall~ k=1,\cdots,n.
\end{equation}
 Therefore
\begin{equation}\label{GraSFFMeaDiff2}
|\nabla h|^2-|\nabla \sigma_1|^2\geq 2\sum_{i\neq j} h_{ii,j}^2-\frac{1}{2}\frac{|\nabla\sigma_2|^2}{\sigma_2}.
\end{equation}

We now handle the third term in \eqref{ConAulMeaEqu1}.
Since the principle curvatures $\lambda_i, \, i \neq n$, are very small compared with $\lambda_n$, thus
$\sigma_2^{nn}=\sum_{i=1}^{n-1}\lambda_i \leq \sigma_1$ is also very
small. We may use \eqref{GraAuxZer3} to substitute
the partial derivative of mean curvature along
the $i=n$ direction,  which in turn is bounded.
We compute that for any $\epsilon>0$
\begin{equation}\label{ConGraTermEst1}
\begin{split}
\frac{\sigma_2^{ij}\sigma_{1,i}\sigma_{1,j}}{\sigma_1^2}
&=\frac{\sigma_2^{nn}\sigma_{1,n}^2+\sum_{i=1}^{n-1}\sigma_2^{ii}\sigma_{1,i}^2}{\sigma_1^2} \\
& \leq 4 \frac{\sigma_2^{nn}\phi_{n}^2}{\phi^2}+\sigma_1\sum_{i=1}^{n-1}\Big(\frac{\sigma_{1,i}}{\sigma_1}
\Big)^2 \\
& =  4 \frac{\sigma_2^{nn} \phi_{n}^2}{\phi^2} + \frac{\sum_{i=1}^{n-1} \big ( h_{nn,i}+\sum_{
j=1}^{n-1} h_{jj,i}\big )^2} {\sigma_1}  \\
& \leq 4 \frac{\sigma_2^{nn} \phi_{n}^2}{\phi^2} + (1+\epsilon)\frac{\sum_{i=1}^{n-1} h_{nn,i}^2}{\sigma_1}
+(1+\frac{4} {\epsilon})\frac{\sum_{i=1}^{n-1} \big(\sum_{j=1}^{n-1}h_{jj,i}\big)^2}{\sigma_1}. \\
\end{split}
\end{equation}

To deal with the last term in \eqref{ConGraTermEst1}, we consider the scalar curvature $\sigma_2$.
For $i\leq n-1$,
\begin{equation}\label{GradScaCurGraSFF}
\sigma_{2,i}=\sum_{j=1}^n\sigma_2^{jj}h_{jj,i}=\sigma_2^{nn}h_{nn,i}+\sum_{j=1}^{n-1}(\sigma_1
-\lambda_j)h_{jj,i},
\end{equation}
where we have used Proposition \ref{DerSigRan1} to get $\sigma_2^{jj} = \sigma_1
-\lambda_j$ at $\bar{x}$.  As a consequence, we get
\begin{equation}\label{GraQuaTerEst1}
\begin{split}
\Big ( \sum_{j=1}^{n-1}h_{jj,i}\Big )^2&=\Big ( \frac{\sigma_{2,i}}{\sigma_1}-\frac{\sigma_2^{nn}}{\sigma_1}
h_{nn,i}+\sum_{j=1}^{n-1}\frac{\lambda_j}{\sigma_1}h_{jj,i}\Big )^2\\
&\leq 3\Big(\frac{(\sigma_{2,i})^2}{\sigma_1^2}+\frac{(\sigma_2^{nn})^2}{\sigma_1^2}h_{nn,i}^2+(n-1)
\sum_{j=1}^{n-1}\frac{\lambda_j^2}{\sigma_1^2}h_{jj,i}^2\Big)\\
&\leq 3\Big(\frac{(\sigma_{2,i})^2}{\sigma_1^2}+\frac{(\sigma_2^{nn})^2}{\sigma_1^2}h_{nn,i}^2
+\frac{2n}{\sigma_1^3}\sum_{j=1}^{n-1}h_{jj,i}^2 \Big),\\
\end{split}
\end{equation}
where we have used $\lambda_j^2\leq \lambda_n^{-1}< 2\sigma_1^{-1}$ for $j\leq n-1$ to get the last
 inequality (see \eqref{MeaMaxEigCom1}).

Use equation \eqref{GradScaCurGraSFF} again, we get that for $i \leq n-1$
\begin{equation}\label{GraQuaTerEst3}
h_{ii,i}=\frac{\sigma_{2,i}}{\sigma_2^{ii}}-\frac{\sigma_2^{nn}}{\sigma_2^{ii}}h_{nn,i} -\sum_{j=1, j
\neq i}^{n-1} \frac{\sigma_2^{jj}}{\sigma_2^{ii}}h_{jj,i}.
\end{equation}
It follows from \eqref{MeaMaxEigCom1} that $\sigma_1\geq \sigma_2^{ii}=\sigma_1-\lambda_i\geq \frac{n-1}{n}
\sigma_1$ for $i\leq n-1$. Then
\begin{equation}\label{GraQuaTerEst4}
\begin{split}
h_{ii,i}^2 &\leq 3\Big(\frac{\big(\sigma_{2,i}\big)^2}{(\sigma_2^{ii})^2}+\frac{(\sigma_2^{nn})^2}{(\sigma_2^{ii}
)^2}h_{nn,i}^2+(n-2)\sum_{j=1, j\neq i}^{n-1} \Big(\frac{\sigma_2^{jj}}{\sigma_2^{ii}}\Big)^2h_{jj,i}^2\Big)\\
& \leq 3\Big(\frac{n}{n-1}\Big)^2 \Big(\frac{\big(\sigma_{2,i}\big)^2}{\sigma_1^2}+\frac{(\sigma_2^{nn}
)^2}{\sigma_1^2}h_{nn,i}^2 + (n-2) \sum_{j=1, j \neq i}^{n-1}h_{jj,i}^2 \Big)\\
&\leq 6 \Big(\frac{\big(\sigma_{2,i}\big)^2}{\sigma_1^2}+\frac{(\sigma_2^{nn})^2}{\sigma_1^2}
h_{nn,i}^2+n\sum_{j=1, j \neq i}^{n-1} h_{jj,i}^2\Big).
\end{split}
\end{equation}

Combine \eqref{GraQuaTerEst1} and \eqref{GraQuaTerEst4} and use $\sigma_1 >n^2$, we have
\begin{equation}\label{GraQuaTerEst5}
\begin{split}
\Big ( \sum_{j=1}^{n-1}h_{jj,i}\Big )^2
&\leq 3\Big[\frac{(\sigma_{2,i})^2}{\sigma_1^2}+\frac{(\sigma_2^{nn})^2}{\sigma_1^2}h_{nn,i}^2+
\frac{2n}{\sigma_1^3}\Big(\sum_{j=1, j\neq i}^{n-1}h_{jj,i}^2 \\
&\quad +6\big ( \frac{\big(\sigma_{2,i}\big)^2}{\sigma_1^2}+\frac{(\sigma_2^{nn})^2}{\sigma_1^2}h_{nn,i}
^2+n\sum_{j=1, j\neq i}^{n-1}h_{jj,i}^2\big )\Big)\Big]\\
&\leq 21\Big[\frac{(\sigma_{2,i})^2}{\sigma_1^2}+\frac{(\sigma_2^{nn})^2}{\sigma_1^2}h_{nn,i}^2+\frac{2n^2}
{\sigma_1^3}\sum_{j=1, j\neq i}^{n-1} h_{jj,i}^2\Big)\Big].
\end{split}
\end{equation}

Plug \eqref{GraQuaTerEst5} into \eqref{ConGraTermEst1}, we have
\begin{equation}\label{ConGraTermEstSma2}
\begin{split}
\frac{\sigma_2^{ij} \sigma_{1,i} \sigma_{1,j}}{\sigma_1^2}
&\leq  4 \frac{\sigma_2^{nn} \phi_{n}^2}{\phi^2} + (1+\epsilon)\frac{\sum_{i=1}^{n-1} h_{nn,i}^2}{\sigma_1} \\
& \quad +(1+\frac{4}{\epsilon})\frac{21}{\sigma_1}\Big[\frac{ \sum_{i=1}^{n-1} (\sigma_{2,i})^2}{\sigma_1^2}
+\frac{(\sigma_2^{nn})^2}{\sigma_1^2} \sum_{i=1}^{n-1} h_{nn,i}^2+\frac{2n^2}{\sigma_1^3} \sum_{i=1}^{n-1}
\sum_{j=1,j \neq i}^{n-1} h_{jj,i}^2\Big)\Big]\\
&\leq 4 \frac{\sigma_2^{nn} \phi_{n}^2}{\phi^2} + (1+\delta)\frac{1}{\sigma_1}\sum_{i=1}^{n-1}
\sum_{j=1, j\neq i}^{n}h_{jj,i}^2+21(1+\frac{4}{ \epsilon})\frac{ \sum_{i=1}^{n-1} (\sigma_{2,i})^2}{\sigma_1^3},
\end{split}
\end{equation}
where
\begin{equation}\label{DefDeltaEpl1}
\delta: =\epsilon+21(1+\frac{4}{\epsilon})
\big(\frac{\sigma_2^{nn}}{\sigma_1}\big)^2+21(1+\frac{4}{\epsilon})\frac{2n^2}{\sigma_1^3}.
\end{equation}

Put \eqref{SquPsoConaFunEqu1}, \eqref{GraSFFMeaDiff2} and \eqref{ConGraTermEstSma2} into \eqref{ConAulMeaEqu1},
we get
\begin{equation} \label{ConAulMeaEqu2}
 \begin{split}
0 &\geq \frac{4\sigma_2\langle\nu,a\rangle}{\phi}+\frac{\Delta\sigma_2}{\sigma_1}+
\frac{1}{\sigma_1}\big(2\sum_{i\neq j} h_{ii,j}^2-\frac{1}{2}\frac{|\nabla\sigma_2|^2}{\sigma_2}\big)
+\Big( \frac{2 |h|^2}{\sigma_1^2}-1\Big)\sigma_2\sigma_1\\
&\quad +3\sigma_3- \frac{3}{2}\Big(\frac{4\sigma_2^{nn}\phi_{n}^2}{\phi^2}+(1+\delta)\frac{1}{\sigma_1}
  \sum_{i \neq j}h_{ii,j}^2+21(1+\frac{4}{\epsilon})\frac{|\nabla\sigma_{2}|^2
 }{\sigma_1^3}\Big)\\
&\geq \frac{4 \sigma_2\langle\nu,a\rangle}{\phi}+\frac{\Delta\sigma_2}{\sigma_1}
+\Big(2- \frac{3}{2} (1+\delta)\Big) \frac{1}{\sigma_1} \sum_{i \neq j }h_{ii,j}^2-\frac{|\nabla\sigma_2|^2}{2\sigma_2\sigma_1} \\
&\quad +\Big(\frac{2|h|^2}{\sigma_1^2} -1\Big)\sigma_2\sigma_1 - \frac{6\sigma_2^{nn}\langle e_n,a
\rangle^2}{\phi^2}- \frac{63}{2}(1+\frac{4}{\epsilon})\frac{|\nabla\sigma_{2}|^2}{2\sigma_1\sigma_2},
\end{split}
\end{equation}
where we have used the Newton-MacLaurin inequality $\sigma_1^2\geq\frac{2n}{n-1}\sigma_2\geq 2\sigma_2$.

By \ref{MeaEstCaseIII} assumption and \eqref{MeaMaxEigCom1} we have
 \begin{equation}\label{StrPosCoeMea1}
\frac{2|h|^2}{\sigma_1^2} -1 \geq \frac{2 \lambda_n^2}{\sigma_1^2} -1 \geq \frac{2n^2}{n^2+1} -1  \geq \frac{1}{2}.
\end{equation}
Again by \eqref{MeaMaxEigCom1} we have
\[
\sigma_2^{nn}=\sum_{i=1}^{n-1}\lambda_i<n\lambda_n^{-\frac{1}{2}}<\frac{2n}{\sqrt{\sigma_1}}.
\]
Take $\epsilon=\frac{1}{10}$ in \eqref{DefDeltaEpl1} and use $\sigma_1\geq 100 n$, we get
\begin{equation}\label{NeaCalGraSFF1}
\delta \leq \frac{1}{10}+1000 \frac{4n^2}{\sigma_1^3}+1000\frac{2n^2}{\sigma_1^3} <\frac{1}{3}.
\end{equation}

Hence it follows from \eqref{StrPosCoeMea1}, \eqref{NeaCalGraSFF1} and \eqref{ConAulMeaEqu2} that in Case III
at point $\bar{x}$
\begin{equation}\label{MeaCurIntEst2}
\begin{split}
 (\phi^2\sigma_1)&\leq \frac{2}{\sigma_2}\Big[-4\sigma_2\langle \nu, a\rangle\phi+ \frac{12n}
{\sqrt{\sigma_1}} \langle e_n,  a\rangle^2 +\Big(-\frac{\Delta\sigma_2}{\sigma_1}+1000\frac{|\nabla\sigma_2|^2}{\sigma_2\sigma_1}
\Big)\phi^2\Big].
\end{split}
\end{equation}

Combine the three cases \eqref{IntEstMeaCurCaseI}, \eqref{IntEstMeaCurCaseII} and \eqref{MeaCurIntEst2}
 all together and use $H(x)=\sigma_1(x)$ and $R(x) =2\sigma_2(x)$, we have proved the interior estimate
 of the mean curvature
\begin{equation}
\sup_{x \in \Omega_{r}} \phi^2(x) H(x)  \leq  C(n) \sup_{y \in \Omega_{r}} (1+R^2(y)+ \frac{1}{R(y)}
+ \frac{1}{R^2(y)}|\nabla R|^2 (y)+ \frac{1}{R(y)}|\Delta R|(y) ) r^2. \label{eq add 1}
\end{equation}

Given any $x \in M$, we take $r =2\langle X(x),a\rangle$,
then $\phi(x)= \langle X(x),a\rangle$.  Hence by \eqref{eq add 1},  we obtain \eqref{IntEstMeaCurCon}. Theorem \ref{IntEstMeaCurConLem} is proved.

\section{The Structure of Generalized Cylinder}\label{StrGenCyl}
In this section, we prove Theorem \ref{GSRSHypCyl}.
First we define an operator which generalizes Cheng-Yau's self-adjoint operator associated to
a given Codazzi tensor \cite{CY77}.
\begin{proposition}\label{SlfAdjDifOpe}
Let $(M^n, g)$ be a Riemannian manifold and let $f$ be a smooth function on $M$.
Let $\psi=\sum_{ij}\psi_{ij} \omega^i \omega^j$ be a symmetric $(2,0)$-tensor on $M$.
Then the operator
\[\square_{\psi,f} :=\psi_{ij} \nabla^2_{ij}-\psi_{ij} f_i \nabla_j\]
is self-adjoint with respect the weighted measure $e^{-f}d \mu$ if and only if
the divergence of $\psi$ $\sum_j\psi_{ij,j}=0$ for all $i$. Here $\nabla$ is the Levi-Civita
connection  and $d \mu$ is the volume element associated with $g=\sum_i\omega_i^2$.

In particular, given a symmetric Codazzi tensor $h= h_{ij}\omega^i\omega^j$ with
$h_{ij,k}= h_{ik,j}$, define $\sigma_2(h)$, $\sigma_2^{ij}(h)$ as in \eqref{DefSigk} and \eqref{DerSigEqu1}, then the tensor $(\psi(h))_{ij} :=\big(\sum_k h_{kk}\big) \delta_{ij}-
h_{ij}=\sigma_2^{ij}(h)$ is divergence free and the operator
\begin{equation}\label{GenCYOpeDef1}
\begin{split}
\square_{\psi(h),f}
&=\sigma_2^{ij}(h) \nabla^2_{ij}-\sigma_2^{ij}(h)f_i \nabla_j,
\end{split}
\end{equation}
is a self-adjoint operator with respect to the weighted measure $e^{-f}d\mu$.
\end{proposition}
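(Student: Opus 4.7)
The strategy is a standard weighted integration by parts. Given $u,v\in C_c^\infty(M)$, I would apply the divergence theorem to the vector field $X^j:=u\,\psi_{ij}\nabla_i v\,e^{-f}$ in an orthonormal frame. Expanding $\nabla_j X^j$ produces four terms, and the contribution from differentiating the weight $e^{-f}$ produces a factor $-u\,\psi_{ij}(\nabla_i v)f_j$. Using the symmetry $\psi_{ij}=\psi_{ji}$, this factor is exactly the negative of the lower-order term $-u\,\psi_{ij}f_i\nabla_j v$ appearing in $\square_{\psi,f}v$, so they cancel. After this cancellation one obtains the clean identity
\begin{equation*}
\int_M u\cdot\square_{\psi,f}v\cdot e^{-f}d\mu
=-\int_M \psi_{ij}\nabla_i v\,\nabla_j u\,e^{-f}d\mu
-\int_M u\,(\operatorname{div}\psi)_i\nabla_i v\,e^{-f}d\mu,
\end{equation*}
where $(\operatorname{div}\psi)_i:=\sum_j\psi_{ij,j}$.

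The first term on the right is manifestly symmetric in $(u,v)$ by the symmetry of $\psi_{ij}$, so self-adjointness of $\square_{\psi,f}$ with respect to $e^{-f}d\mu$ is equivalent to the symmetry in $(u,v)$ of the second term for all test functions $u,v$. Sufficiency of $\operatorname{div}\psi=0$ is then immediate. For necessity, one extracts a pointwise statement by the fundamental lemma of the calculus of variations: fix $p\in M$, localize $u$ to a bump concentrating at $p$, integrate the term $\int v\,(\operatorname{div}\psi)_i\nabla_i u\,e^{-f}d\mu$ by parts once more, and pass to the limit to reduce the symmetry condition to an identity of the form $2(\operatorname{div}\psi)_i(p)\nabla_i v(p)=v(p)\cdot(\cdots)$ valid for all $v$. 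Choosing $v$ with $v(p)=0$ but $\nabla v(p)$ arbitrary forces $(\operatorname{div}\psi)(p)=0$.

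For the Codazzi consequence, note that the tensor introduced is precisely $(\psi(h))_{ij}=\sigma_1(h)\delta_{ij}-h_{ij}=\sigma_2^{ij}(h)$, by the formula recorded immediately after \eqref{DerSigEqu1}. Its divergence is
\begin{equation*}
\sum_j(\psi(h))_{ij,j}=\nabla_i\sigma_1(h)-\sum_j h_{ij,j}=\sum_k h_{kk,i}-\sum_j h_{ij,j}.
\end{equation*}
The Codazzi identity $h_{ij,k}=h_{ik,j}$ together with $h_{ij}=h_{ji}$ gives $h_{ij,j}=h_{ji,j}=h_{jj,i}$, so the two sums cancel and $\psi(h)$ is divergence-free. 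The first part of the proposition then yields the self-adjointness of $\square_{\psi(h),f}$ in the form \eqref{GenCYOpeDef1}. The only mildly delicate point in the whole argument is the necessity direction of the equivalence, where one must pass from an integrated identity against all test functions to a pointwise statement about $\operatorname{div}\psi$; the remaining computations are routine bookkeeping.
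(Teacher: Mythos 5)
Your sufficiency argument ($\operatorname{div}\psi=0\Rightarrow$ self-adjoint) is the same integration-by-parts computation the paper uses: expand the integrand into a total divergence, a term symmetric in $(u,v)$, and a $\sum_j\psi_{ij,j}$ term; kill the first by Stokes' theorem and the third by the divergence-free hypothesis; and note that the derivative hitting the weight $e^{-f}$ cancels the drift term via the symmetry of $\psi$. The Codazzi computation at the end is likewise the standard one, using that total symmetry of $h_{ij,k}$ follows from $h_{ij}=h_{ji}$ together with $h_{ij,k}=h_{ik,j}$.

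Where you genuinely go beyond the paper is the necessity direction. The paper's proof establishes only the ``if'' part: it assumes $\sum_j\psi_{ij,j}=0$ and verifies that the resulting bilinear form $-\int_M\psi_{ij}\nabla_i u\,\nabla_j v\,e^{-f}\,d\mu$ is symmetric, without addressing why self-adjointness forces the divergence to vanish, even though the proposition states an equivalence. Your localization argument correctly fills this in. It could be stated more directly than via a limiting family of bumps: after the additional integration by parts you have an identity of the form $\int_M u\,F[v]\,d\mu=0$ for all compactly supported $u$, so the fundamental lemma of the calculus of variations gives $F[v]\equiv 0$ pointwise, and then choosing $v$ with $v(p)=0$, $\nabla v(p)$ arbitrary (together with $e^{-f}(p)>0$) forces $(\operatorname{div}\psi)(p)=0$. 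The essential idea is right, and your proof is in fact more complete than the one in the paper.
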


\begin{proof}
For any compact supported $C^2$ functions $u$ and $v$ on $M$, by Stoke's theorem,
\[\begin{split}  \int_{M} \square_{\psi,f} u \cdot v e^{-f}d\mu
&=\int_{M} (\psi_{ij}u_{i}ve^{-f})_j d\mu-\int_{M} \psi_{ij}u_i
v_je^{-f} d\mu
-\int_{M} \psi_{ij,j}u_ive^{-f} d\mu \\
& = -\int_{M}\psi_{ij}u_iv_je^{-f} d\mu.
\end{split}\]
Therefore the operator $\square_{\psi, f}$ is self-adjoint with respect to the measure
$e^{-f}d\mu$.
\end{proof}

\begin{remark}  On a GRS satisfying \eqref{GraShrRicciSolitonEqu1}, there is another natural intrinsic
self-adjoint operator $\square_{\operatorname{Ric}}=R_{ij}\nabla_{ij}^2 $
with respect the weighted measure $e^{-f}d\mu$. We believe that this operator should be useful.
\end{remark}

We need the following proposition (analogue of \cite[Proposition 2]{CY77}).
\begin{proposition}\label{PinTypIneSqu1}
Let $(M^n,g)$ be a compact manifold with boundary and let $f$ be a smooth function
on $M$. Suppose $\psi=\sum_{ij}\psi_{ij} \omega^i \omega^j$ is a semipositive symmetric $(2,0)$-tensor
which is divergence free. Then the (possibly degenerate) elliptic operator
 $\square_{\psi, f}$ has the following
 property. For any $C^2$ positive function $u$ and any non-negative $C^2$ function $v$ satisfying
 $v|_{\partial M}=0$, we have
\begin{equation}\label{FirEigSqu1}
\Big(-\int_M v\square_{\psi, f} ve^{-f} d \mu \Big) \Big(\int_Mv^2e^{-
f} d\mu \Big)^{-1}\geq \inf_M\frac{-\square_{\psi, f} u}{u}.
\end{equation}
\end{proposition}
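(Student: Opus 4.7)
The plan is to run a Barta-type argument adapted to the weighted, possibly degenerate elliptic operator $\square_{\psi,f}$. The key idea is to substitute $v = uw$ with $w := v/u$ (well-defined and $C^2$ since $u>0$), expand $\square_{\psi,f}(uw)$ by a Leibniz-type rule, integrate by parts against the weighted measure using both the self-adjointness from Proposition \ref{SlfAdjDifOpe} and the vanishing $w|_{\partial M}=0$, and then discard a nonnegative term coming from the semipositivity of $\psi$.

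First I would record the pointwise identity
\begin{equation*}
\square_{\psi,f}(uw) \;=\; w\,\square_{\psi,f} u + u\,\square_{\psi,f} w + 2\psi_{ij} u_i w_j,
\end{equation*}
which follows directly from the product rule applied to $(uw)_{ij}$ and $(uw)_j$, together with the symmetry of $\psi$. Multiplying by $v\,e^{-f}=uw\,e^{-f}$ and integrating gives three contributions. The first is immediately $\int_M v^2 \frac{\square_{\psi,f} u}{u}\,e^{-f}\,d\mu$.

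Next I would process $\int_M u^2 w\,\square_{\psi,f} w\, e^{-f}\,d\mu$ by Stokes' theorem. Since $\psi$ is divergence-free and $w$ vanishes on $\partial M$ (hence $u^2 w$ vanishes on $\partial M$), the same integration by parts used to prove Proposition \ref{SlfAdjDifOpe} yields
\begin{equation*}
\int_M u^2 w\,\square_{\psi,f} w \, e^{-f}\,d\mu \;=\; -\int_M (u^2 w)_i\, \psi_{ij} w_j\, e^{-f}\,d\mu \;=\; -2\!\int_M u w\, \psi_{ij} u_i w_j\, e^{-f}\,d\mu \;-\; \int_M u^2\, \psi_{ij} w_i w_j\, e^{-f}\,d\mu.
\end{equation*}
The crucial cancellation is now visible: the first term on the right exactly cancels the third contribution $\int_M 2uw\,\psi_{ij} u_i w_j\,e^{-f}\,d\mu$ from the Leibniz expansion.

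Combining the remaining pieces yields the identity
\begin{equation*}
\int_M v\,\square_{\psi,f} v\, e^{-f}\,d\mu \;=\; \int_M v^2\,\frac{\square_{\psi,f} u}{u}\, e^{-f}\,d\mu \;-\; \int_M u^2\, \psi_{ij} w_i w_j\, e^{-f}\,d\mu.
\end{equation*}
Because $\psi$ is semipositive, the last integrand is nonnegative, so
\begin{equation*}
-\int_M v\,\square_{\psi,f} v\, e^{-f}\,d\mu \;\geq\; \int_M v^2\,\frac{-\square_{\psi,f} u}{u}\, e^{-f}\,d\mu \;\geq\; \Bigl(\inf_M \frac{-\square_{\psi,f} u}{u}\Bigr)\int_M v^2\,e^{-f}\,d\mu,
\end{equation*}
and dividing by $\int_M v^2 e^{-f}\,d\mu$ delivers \eqref{FirEigSqu1}. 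The only subtle point is the justification of the integration by parts: one must verify that the boundary term $\int_{\partial M} u^2 w\,\psi_{ij} w_j \nu_i\,e^{-f}\,d\sigma$ vanishes, which is immediate from $w|_{\partial M}=0$; no regularity of $\psi$ at the boundary beyond $C^1$ is needed, and the (possibly) degenerate character of $\psi$ causes no issue because every manipulation is algebraic and uses only semidefiniteness rather than invertibility.
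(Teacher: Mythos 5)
Your proof is correct, and it takes a genuinely different route from the paper's. The paper regularizes the operator ($\square_{\psi,f}+\epsilon\Delta_f$), invokes the variational characterization of the first Dirichlet eigenvalue $\lambda$ together with the existence and interior positivity of the first eigenfunction $v_\lambda$, and then runs a Barta-type maximum-principle comparison of $v_\lambda/u$ at an interior maximum to conclude $\lambda\geq -\square_{\psi,f}u/u$ pointwise, finally letting $\epsilon\to 0$. You instead carry out a ground-state substitution $v=uw$ and derive the exact Picone-type integral identity
\begin{equation*}
-\int_M v\,\square_{\psi,f}v\,e^{-f}\,d\mu \;=\; \int_M v^2\,\frac{-\square_{\psi,f}u}{u}\,e^{-f}\,d\mu \;+\; \int_M u^2\,\psi_{ij}w_iw_j\,e^{-f}\,d\mu,
\end{equation*}
then discard the last term by semipositivity of $\psi$. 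Your argument buys several things: it avoids any appeal to existence and regularity of a first eigenfunction; it handles the possibly degenerate operator directly, with no $\epsilon$-regularization and limit; and it produces a sharp identity rather than just the inequality, making the source of the gap (the positive definite quadratic form in $w=v/u$) explicit. The paper's argument is shorter to state because it outsources the analytic work to standard eigenfunction theory, but yours is both more elementary and more self-contained. The only hypotheses you actually use are $u>0$ on $\overline{M}$ (to make $w$ a $C^2$ function) and $v|_{\partial M}=0$ (to kill the boundary term in the integration by parts), so you also correctly note that $v\geq 0$ is not needed for this inequality.
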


\begin{proof}
We only need to prove \eqref{FirEigSqu1} assuming $\square_{\psi, f}$ is non-degenerate
elliptic, as one may
replace $\square_{\psi, f}$ by $\square_{\psi, f}+\epsilon \Delta_{f}$
and let $\epsilon\rightarrow 0$.

Let $\lambda$ be the first (positive) eigenvalue and $v_\lambda$ be the first eigenfunction of
$\square_{\psi,  f}$ over $M$ with the zero boundary condition.
Then it is well-known that the left hand of \eqref{FirEigSqu1} is always not less than $\lambda$ and
$v_\lambda$ is positive in the interior of $M$.

Consider the function $\frac{v_\lambda}{u}$. At the interior point where  $\frac{v_\lambda}{u}$ attains
its maximum, we have
\begin{align}
& 0= \nabla \frac{v_\lambda}{u}=\frac{u \nabla v_\lambda-v_\lambda \nabla
u}{u^2}, \label{GraEigFunQuo1} \\
& 0\geq \nabla^2\frac{v_\lambda}{u}=\frac{u \nabla^2 v_\lambda-v_\lambda \nabla^2
u}{u^2}. \label{HessEigFunQuo1}
\end{align}
Hence
 \[\begin{split}
\lambda&=\frac{-\square_{\psi,f} v_\lambda}{v_\lambda}=-\frac{\psi_{ij}\nabla^2_{ij} v_\lambda}{v_\lambda}+\frac{\psi_{ij}f_i \nabla_jv_\lambda}{v_\lambda}\\
&\geq-\frac{\psi_{ij} \nabla^2_{ij} u}{u}+\frac{\psi_{ij} f_i \nabla
_ju}{u}= \frac{-\square_{\psi, f}u}{u}.
\end{split}
\]
This verifies \eqref{FirEigSqu1}.
\end{proof}

Let $X: M^n \rightarrow \mathbb{R}^{n+1}$ be a hypersurface with induced metric $g$ and (outer) unit normal $\nu$.
Let $h$ be the second fundamental form and let $H$ be the mean curvature.
Given a smooth function $f$ on $M$, we define two operators
using a local orthonormal frame $\{e_1,\cdots,e_n\}$
\begin{equation*}
\square_h := \sigma_2^{ij}(h) \nabla_{ij}^2 \quad \text{ and } \quad
\square_{h,f}: = \square_h - \sigma_2^{ij}(h) f_i \nabla_j,
\end{equation*}
where $\sigma_2(h)$, $\sigma_2^{ij}(h)$  are defined in \eqref{DefSigk} and \eqref{DerSigEqu1}.
From Proposition \eqref{SlfAdjDifOpe}, the operator $\square_{h,f}$ is a self-adjoint operator with respect to the weighted measure $e^{-f}d\mu$.

We compute $\square_{h,f}\nu$. Note that
\begin{equation}\label{DerPosNorHyp1}
X_{ij} = -h_{ij}\nu, \quad \nu_i = h_{il}e_l, \quad
\nu_{ij}= h_{ij,l}e_l-h^2_{ij}\nu, \qquad 1\leq i, j\leq n.
\end{equation}
From \eqref{DerPosNorHyp1}, we have
\begin{eqnarray}
  \square_{h,f} \nu &=&\sigma_2^{ij}(h)(h_{ij,l}e_l-h^2_{ij}\nu)-\sigma_2^{ij}(h)f_ih_{jl})e_l\nonumber\\
   &=&\frac{1}{2}\nabla R-(\sigma_1(h)\sigma_2(h)-3\sigma_3(h))\nu- \operatorname{Ric}(\nabla f). \label{SquNorVecEqu1}
\end{eqnarray}

For a GRS satisfying \eqref{GraShrRicciSolitonEqu1}, we have the following equation analog to the one considered
by Cheng-Yau in \cite{CY77}.
\begin{proposition}\label{SquNorVecEquGSRSPro}
Let $(M^n,g,f)$ be an isometrically embedded hypersurface in $\mathbb{R}^{n+1}$ with a GRS structure \eqref{GraShrRicciSolitonEqu1}. Then
\begin{equation}\label{SquNorVecEquGSRS1}
  \square_{h,f}\nu  = -(\sigma_1(h)\sigma_2(h)-3\sigma_3(h))\nu.
\end{equation}
\end{proposition}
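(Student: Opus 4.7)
The proposition is essentially immediate once one juxtaposes the already derived formula \eqref{SquNorVecEqu1} with the soliton identity \eqref{RicSolIde1}, so the main task is to explain why the tangential terms cancel. The plan is to take \eqref{SquNorVecEqu1} as the starting point and show that its two tangential contributions, $\tfrac{1}{2}\nabla R$ and $-\operatorname{Ric}(\nabla f)$, combine to zero thanks to the GRS structure.

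First I would simply recall that for any GRS satisfying \eqref{GraShrRicciSolitonEqu1} the identity \eqref{RicSolIde1} reads
\begin{equation*}
\nabla R = 2\,\operatorname{Ric}(\nabla f),
\end{equation*}
where $\operatorname{Ric}$ refers to the intrinsic Ricci tensor of $(M,g)$. Since $M^n\hookrightarrow\mathbb R^{n+1}$ is isometrically embedded, the induced metric is exactly the one appearing in the soliton equation, so $\operatorname{Ric}$ here can be computed from the second fundamental form via the Gauss equation. In particular, contracting $R_{ijkl}=h_{ik}h_{jl}-h_{il}h_{jk}$ yields
\begin{equation*}
R_{jl}=\sigma_1(h)\,h_{jl}-(h^2)_{jl}=\sigma_2^{ij}(h)\,h_{il},
\end{equation*}
using $\sigma_2^{ij}(h)=\sigma_1(h)\delta_{ij}-h_{ij}$. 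This makes the term $\sigma_2^{ij}(h)f_i h_{jl}e_l$ in \eqref{SquNorVecEqu1} literally equal to $\operatorname{Ric}(\nabla f)$ as a tangent vector field on $M$, justifying the appearance of that term in the formula.

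Once these two pieces are identified, the conclusion follows by direct substitution: \eqref{RicSolIde1} gives $\tfrac{1}{2}\nabla R - \operatorname{Ric}(\nabla f)=0$, so the only surviving term in \eqref{SquNorVecEqu1} is the normal component, yielding
\begin{equation*}
\square_{h,f}\nu = -\bigl(\sigma_1(h)\sigma_2(h)-3\sigma_3(h)\bigr)\nu,
\end{equation*}
as claimed. There is no real obstacle here; the only thing to be careful about is the bookkeeping that shows the tangential piece arising from $-\sigma_2^{ij}(h)f_i\nabla_j\nu$ really is $\operatorname{Ric}(\nabla f)$ in the intrinsic sense used in \eqref{RicSolIde1}, which is exactly what the Gauss equation together with the formula $\sigma_2^{ij}(h)=\sigma_1(h)\delta_{ij}-h_{ij}$ provides.
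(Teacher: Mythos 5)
Your proof is correct and follows exactly the route the paper takes: substitute the soliton identity \eqref{RicSolIde1}, namely $\nabla R = 2\operatorname{Ric}(\nabla f)$, into \eqref{SquNorVecEqu1} so that the tangential terms $\tfrac12\nabla R - \operatorname{Ric}(\nabla f)$ cancel, leaving only the normal term. The extra bookkeeping you supply (using the Gauss equation to show $\sigma_2^{ij}(h)h_{jl} = R_{il}$, hence $\sigma_2^{ij}(h)f_i h_{jl}e_l = \operatorname{Ric}(\nabla f)$) is already implicit in the paper's derivation of \eqref{SquNorVecEqu1} itself, so you are not adding a new idea, merely re-verifying that line.
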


\begin{proof}
The proposition follows from \eqref{SquNorVecEqu1} and \eqref{RicSolIde1}.
\end{proof}

\smallskip
Now we can prove Theorem \ref{GSRSHypCyl}.  Since $(M,g,f)$ be a gradient shrinking Ricci soliton
with nonnegative Ricci curvature,  then by the maximum principle,  either $(M,g,f)$ is flat or the scalar curvature is strictly positive (see \eqref{ScaStrPosBou1}).
Suppose $(M,g)$ is an Euclidean hypersurface with nonnegative Ricci curvature and positive scalar curvature, then the second fundamental form $h$ is semipositive.
It follows that $M$ is a convex hypersurface and the curvature operator is semipositive.

With the splitting Theorem \ref{SplGSRSThm1}, we can assume that the universal covering  $(\widetilde{M},\tilde{g})=(N,h)\times \mathbb{R}^{n-k}$ split isometrically and $(N,h)$ has
   strictly positive Ricci curvature. However, it is not illuminating to see that whether $(N,h)$ or one of its quotient admit an isometric embedding in $\mathbb{R}^{k+1}$.  Alternatively, since $(M,g)$ is Euclidean hypersurface, we can also establish an splitting theorem easily in an extrinsic way.

First of all, if $M$ is noncompact convex  Euclidean hypersurface, then the Gauss image must lies in a closed hemisphere of $S^n$; moreover, if $(M,g)$ has positive section curvature, then Gauss image is an open convex subset of $S^n$ \cite{Wu74}. Therefore, there is a unit vector $a\in S^n$ so that $\langle \nu, a\rangle\geq 0$ on $M$. If $\langle \nu, a\rangle=0$ at one point, then we claim that $\langle \nu, a\rangle$ is identically zero. From \eqref{SquNorVecEquGSRS1} we get
\begin{equation}
\square_{h,f}\langle \nu, a\rangle=-(\sigma_1(h)\sigma_2(h)-3\sigma_3(h))\langle \nu, a\rangle.
\end{equation}
Note that the positive scalar curvature on $M$ implies that the operator $\square_{h,f}$ is elliptic, see \eqref{EllipticSig2Ope}.
Hence by applying the maximum principle we  conclude that either $\langle \nu, a\rangle$ is everywhere positive or $\langle \nu, a\rangle\equiv 0$. In the later case, since $a$ is constant tangent vector and therefore parallel, we can split out one line  globally along $a$. Continue by induction, we prove that $M^n=N\times \mathbb{R}^{n-k}$ for some $2\leq k\leq n$, where  $N$ does not contain any straight lines.

We note that on a codimension one shrinking GRS isometrically embedded in $\mathbb{R}^{n+1}$,  we have the equation \eqref{RicSolIde2}
\[\sigma_2(h)=\frac{1}{2}R=\frac{1}{2}(f-|\nabla f|^2).\]
In the same way as did for the Ricci curvature in Theorem \ref{SplGSRSThm1}, we can deduce the constant rank theorem for the second fundamental form, and therefore the splitting theorem.

In the following we will show by contradiction argument that $N$ must be compact.
With the splitting structure, let us assume $M(=N)$ is noncompact and there exists a vector $a$ such that $\langle\nu, a\rangle>0$. In this case,  $M$ is essentially a graph along $-a$, the set \begin{equation}\label{CompactSet}
\Omega_{r}=\{x \in M |\langle X(x),
-a\rangle\leq r \}
\end{equation}
 is compact for all $r>0$ and $\langle X(x),
-a\rangle$ is asymptotic to the geodesic distance of $M$,  see \cite{Wu74, CY77}.

Combine Proposition \ref{PinTypIneSqu1} and Proposition \ref{SquNorVecEquGSRSPro}, we have
that for any compact region $\Omega \subset M$ and for any nonnegative $C^2$ function $u$ with
$u\big|_{\partial \Omega}=0$
\begin{equation}\label{FirEigNorEqu}
\begin{split}
\min_{\Omega}\Big(\sigma_1\sigma_2-3\sigma_3\Big)&= \min_\Omega \Big(\frac{-\square_{h,f} \langle\nu, a\rangle}{\langle\nu, a\rangle}\Big)\\
&\leq \Big(-\int_\Omega u\square_{h,f}ue^{-f}\Big)\Big(\int_\Omega u^2e^{-f}\Big)^{-1}\\
&=\Big(\int_\Omega\sigma_2^{ij}u_iu_je^{-f}\Big)\Big(\int_\Omega u^2e^{-f}\Big)^{-1}.\\
\end{split}
\end{equation}

We apply \eqref{FirEigNorEqu} to $u(x)= r
-\langle X(x), -a\rangle$ on $\Omega_{r}$ and get
\begin{equation}\label{FirEigPosVec}
\begin{split}
\Big(&\int_{\Omega_{r}}\sigma_2^{ij}u_iu_je^{-f}\Big)\Big(\int_{\Omega_{r}} u^2e^{-f}\Big)^{-1}\\
&=\Big(\int_{\Omega_{r}}(H\delta_{ij}-h_{ij})\langle e_i,a\rangle\langle e_j,a\rangle e^{-f}\Big)\Big(\int_{\Omega_{r}}(r-\langle X,-a\rangle)^2e^{-f}\Big)^{-1}\\
&\leq \Big(\int_{\Omega_{r}}He^{-f}\Big)\Big(\frac{r^2}{4}\int_{\Omega_{\frac{r}
{2}}}e^{-f}\Big)^{-1}\\
&= 4r^{-2}\Big(\int_{\Omega_{r}}He^{-f}\Big)\Big(\int_{\Omega_{\frac{r}{2}}}e^{-f}\Big)^{-1}.
\end{split}
\end{equation}

Since $(M,g)$ is a  shrinking GRS  with positive Ricci curvature, combine the
fact that the scalar curvature have a strictly lower bound  \eqref{ScaStrPosBou1} with the scalar curvature growth estimate  \eqref{PloGroScaCur1}, \eqref{PloGroGraScaCur1}
and \eqref{PolyGrowLapSca}, \eqref{IntEstMeaCurCon} implies that
\begin{equation}\label{meaIntWeiMea1}
H(x)\le C(n)(1+r(x))^4,
\end{equation}
where we used the fact that $\langle X(x), -a \rangle$ is asymptotic to the intrinsic geodesic distance function
$r(x)$ of $M$.  In particular, the mean curvature is integrable with respect to the weighted measure $e^{-f}d\mu$,
\begin{equation}\label{MeaIntWeiMea1}
\int_M H e^{-f}d \mu<\infty.
\end{equation}
Combine with \eqref{FirEigNorEqu}, \eqref{FirEigPosVec} and \eqref{MeaIntWeiMea1}, we have
\begin{equation}\label{NonComAloCyl1}
\min_{\Omega_{r}}\Big(\sigma_1\sigma_2-3\sigma_3\Big)\leq C(n)r^{-2}.
\end{equation}
Let $r$ go to infinity, this implies that
\begin{equation}\label{NonComAsyCyl1}
\inf_{M}\Big(\sigma_1\sigma_2-3\sigma_3\Big)=0.
\end{equation}
On the other hand, since $h$ is in the so-called G{\aa}rding cone $\Gamma_2$, by Newton-MacLaurin inequality we have
\begin{equation}
\sigma_1\sigma_2-3\sigma_3\geq C(n)\sigma_2^{\frac{3}{2}}>\delta,
\end{equation}
where the uniform positive lower bound of scalar curvature is ensured by \eqref{ScaStrPosBou1}.
This contradicts with \eqref{NonComAsyCyl1}, and consequently, $M$ must be compact.

Since $M$ is compact hypersurface, then Ricci tensor has full rank at the elliptic point, i.e.  there exist $x_0\in M$ such that ${\rm Ric}(x_0)>0$. With Theorem \ref{SplGSRSThm1}, we conclude that Ricci tensor has constant rank; therefore the Ricci curvature and also the curvature operator are positive everywhere.  Finally, by Theorem \ref{GSRSPosCOCla}, the compact  shrinking GRS  with positive curvature operator has to be the round sphere or its metric quotient. Since $M$ is an Euclidean hypersurface, then it must be a round sphere. Now the proof of Theorem \ref{GSRSHypCyl} is complete.

\end{document}